\newtheorem{observation}{Observation}
\newtheorem{prelem}{{\bf Theorem}}
\newenvironment{oldtheorem}
{\begin{prelem}{\hspace{-0.5em}{\bf}}}{\end{prelem}}
\begin{document}

\setcounter{page}{1}
\publyear{22}
\papernumber{2139}
\volume{188}
\issue{1}

   \finalVersionForARXIV

\title{On Finding Hamiltonian Cycles in Barnette Graphs}

\author{Behrooz Bagheri Gh.\thanks{Research supported in part by FWF
                       Project P27615-N25.}\thanks{Address for correspondence: Algorithms and Complexity Group,
                   Vienna University of Technology,  Favoritenstrasse 9-11, 1040 Vienna, Austria. \newline \newline
                    \vspace*{-6mm}{\scriptsize{Received October 2020; \ accepted  August  2021.}}}
\\
Algorithms and Complexity Group\\
Vienna University of Technology\\
Favoritenstrasse 9-11, 1040 Vienna, Austria\\
behrooz@ac.tuwien.ac.at\\
\and Tomas Feder\\
Computer Science Department \\
Stanford University \\
Stanford, California 94305, USA\\
tomas@theory.stanford.edu\\
\and   Herbert Fleischner$^*$\\
Algorithms and Complexity Group\\
Vienna University of Technology\\
Favoritenstrasse 9-11, 1040 Vienna, Austria
fleischner@ac.tuwien.ac.at\\
\and Carlos Subi\\
carlos.subi@hotmail.com}

\maketitle

\runninghead{B. Bagheri Gh. et al.}{On Finding Hamiltonian Cycles in Barnette Graphs }

\begin{abstract}
In this paper we deal with hamiltonicity in planar cubic graphs $G$
having a facial $2-$factor $\mathcal{Q}$ via (quasi) spanning trees of
faces in $G/\mathcal{Q}$ and study the algorithmic complexity of finding
such (quasi) spanning trees of faces. Moreover, we show that
if Barnette's Conjecture is false, then hamiltonicity in $3-$connected
planar cubic bipartite graphs is an NP-complete problem.
\end{abstract}

\begin{keywords}
Barnette's Conjecture; eulerian plane graph; hamiltonian cycle; spanning tree of faces;  $A-$trail.
\end{keywords}

\section{Introduction}

Our joint paper~\cite{Bagheri} can be considered as the
point of departure for the subsequent discussion and results of this
paper. {We start with} a few historical remarks.
In $1884$, Tait  conjectured that every cubic
$3-$connected planar graph is
hamiltonian~\cite{Tait}.
And Tait knew that the validity of his conjecture would yield a simple
proof of the Four Color Conjecture. On the other hand, the Petersen graph
is the smallest non-planar $3-$connected cubic graph which is not hamiltonian,~\cite{Petersen}.
Tait's Conjecture
 was disproved by Tutte in $1946$, \cite{Tutte1946}. However, none of the known counterexamples of Tait's Conjecture is
bipartite. Tutte himself conjectured that every cubic $3-$connected
bipartite graph is hamiltonian~\cite{Tutte1971}, but this was shown to be
false by the construction of a counterexample,
the Horton graph~\cite{Horton}.  Barnette  proposed a
combination of Tait's and Tutte's Conjectures implying that every counterexample
to Tait's conjecture is non-bipartite. {However, it is a well-known fact that hamiltonicity in planar cubic graphs is an
NP-complete problem. This implies that the existence of an $A-$trail in
plane eulerian graphs is also an NP-complete problem even if restricted
to planar $3-$connected eulerian graphs (see Definition~\ref{DEF:A-trail}.(i) below).}

\medskip \noindent
{\bf Barnette's Conjecture}~\cite{Barnette}
{\it Every $3-$connected cubic planar bipartite graph is hamiltonian.}\\

We denote $3-$connected cubic planar bipartite graphs as {\sf Barnette graphs}.
Holton, Manvel and McKay showed in~\cite{Holton}  that Barnette graphs with up to $64$ vertices are hamiltonian.
The conjecture
also holds for the infinite family of Barnette graphs where all faces are either quadrilaterals or  hexagons, as shown by
Goodey~\cite{Goodey}. However, it is NP-complete to decide whether
a $2-$connected cubic planar bipartite graph  is hamiltonian~\cite{Takanori}.

{We note that \cite{Feder} can essentially be viewed in part as a preliminary version of \cite{Bagheri} and the current paper, with \cite{Bagheri} focusing on graph theoretical results in \cite{Feder}, whereas the current paper's focus are mainly algorithmic and complexity considerations as developed in \cite{Feder}. However, additional results were developed to put \cite{Feder} in a more general perspective.}

{We outline the structure of this paper as follows.}

\medskip
{Section 2 of this paper starts with listing several known results from \cite{Bagheri} and other papers, and several definitions; they can be viewed as the basis for this paper.}

\medskip
{In Section 3 we first prove some structural results leading to a decision in polynomial time whether a Barnette graph with certain properties has a hamiltonian cycle of a special type (Corollary \ref{CR:2}).}

\medskip
{In Section 4 we first prove NP-completeness of the existence of certain types of spanning trees of faces (see Definition \ref{DEF:QST}), and subsequent corollaries. Finally it is shown that if Barnette's Conjecture is false, then hamiltonicity in Barnette graphs is an NP-complete problem.}

\section{Preliminary discussion}

As for the terminology used in this paper we follow~\cite{Bondy} unless stated explicitly otherwise.
In particular, the subset $E(v)\subseteq E(G)$ denotes the set of edges incident to $v\in V(G)$.
For definitions we refer to~\cite{Bagheri} but for completeness' sake we repeat some of them. {Moreover, we present several known results (Theorems A-E) which can be viewed as the frame in which the results of this paper take place.}

\begin{definition}\label{DEF:A-trail}\hfil\hbox{}\break

\vspace*{-6mm}
\begin{itemize}
\item[(i)] Let $H$ be a $2-$connected eulerian plane graph. An eulerian trail $L$ in $H$ is an {\sf $A-$trail}
if   any two consecutive edges of $L$  belong to a face boundary.
\item[(ii)]  An $A-$trail $L$ in an eulerian triangulation of the plane is called {\sf non-separating} if for every face boundary $T$  at least two edges of $E(T)$ are consecutive in $L$.
\item[(iii)] An {\sf $A-$partition} of $H$ is a vertex partition  $V_L(H)=\{V_1,V_2\}$  induced by
an $A-$trail $L=e_1e_2\ldots e_m$    as follows.  Consider  a
$2-$face-coloring of $H$ with colors $1$ and $2$. For every vertex $v$ of $H$, $v\in V_i$ if and only if  there is $j\in \{1,\ldots,m-1\}$ such that $v\in V(e_j)\cap V(e_{j+1})$ and the face containing $e_j$ and $e_{j+1}$ in its boundary is colored $3-i$,  $i=1,2$,
where $V(e)$ is the set of vertices incident to the edge $e$.
\end{itemize}
\end{definition}
{There is a close conection between hamiltonian cycles in Barnette graphs and A-trails in their dual.}

\begin{oldtheorem}~$(${\rm\cite[Theorem VI.71]{Fleischner}}$)$\label{dualAtrail}
A $2-$connected plane cubic bipartite graph has a hamiltonian cycle if and only if its dual graph has a non-separating $A-$trail.
\end{oldtheorem}

\begin{definition}\label{DEF:Radial}\hfil\hbox{}\break

\vspace*{-7mm}
\begin{itemize}
\item[(i)] Suppose $H$ is a $2-$connected plane  graph. Let  $\mathcal{F}(H)$ be
the set of faces of $H$. The {\sf radial graph} of $H$ denoted by
$\mathcal{R}(H)$ is a bipartite graph
with the vertex bipartition $\{V(H),\mathcal{F}(H)\}$
such that $xf\in E(\mathcal{R}(H))$ if and only if $x$ is a vertex in the boundary of
$F\in \mathcal{F}(H)$ corresponding to $f\in V(\mathcal{R}(H))$.\newline
\item[(ii)]
 Let $U\subseteq V(H)$ and let $\mathcal{T}\subset \mathcal{F}(H)$ be a
set of bounded faces of $H$.
The {\sf restricted radial graph}
$\mathcal{R}(U,\mathcal{T})\subset \mathcal{R}(H)$ is defined  by $\mathcal{R}(U,\mathcal{T})=\langle U\cup \mathcal{T}\rangle_{\mathcal{R}(H)}$.
\end{itemize}
\end{definition}

{For the next definition} let $H$ be a  $2-$connected plane graph, let $U\subseteq V(H)$
and let $\mathcal{T}\subset \mathcal{F}(H)$ be a set of bounded faces
whose boundaries are pairwise edge-disjoint
and such that every vertex of $H$ is contained in some element of $\mathcal{T}$.
 We define a subgraph $H_{\mathcal{T}}$ of $H$  by
$H_{\mathcal{T}}=H[ \cup_{F\in \mathcal{T}}E(F)]$.

\begin{definition}\label{DEF:QST}
If  $\big|\big\{F\in \mathcal{T}\ :\ x\in V(F)\big\}\big|=\frac{1}{2} \deg_H(x)$
for every $x\in V(H)\setminus U$,
and if $\mathcal{R}(U,\mathcal{T})$ is a tree, then we call $H_{\mathcal{T}}$
a {\sf quasi spanning tree of faces} of $H$, and the vertices in
$U\ (V(H)\setminus U)$ are called {\sf proper} $(${\sf quasi}$)$ vertices.
If $U=V(H)$, then $H_{\mathcal{T}}$ is called a {\sf spanning tree of faces}.
\end{definition}

{In other words, a spanning tree of faces is a spanning bridgeless cactus whose cycles are face boundaries.}

\begin{definition}\label{DEF:Leapfrog}
	The {\sf leapfrog extension} $Lf(G)$ of a $2-$connected cubic plane
	graph $G$ is the cubic plane graph obtained from $G$  by replacing every $v\in V(G)$ by
	a hexagon $C_6(v)$, with $C_6(v)$ and $C_6(w)$ sharing an edge if and
	only if $vw\in E(G)$; and these hexagons are faces of $Lf(G)$.
\end{definition}

We note in passing that we call leapfrog extension
what is called in other papers  vertex envelope, or leapfrog construction, or leapfrog operation, or  leapfrog transformation (see e.g.~\cite{FleischnerEnvelope,Fowler,Kardos,Yoshida}).
\vskip 0.4cm
If a plane graph has a face-coloring with color set $X$, the faces of color $x\in X$ will be called $x-$faces.

\begin{observation}\label{obs:A-trail}
We observe that if $H$ is a  plane eulerian graph with $\delta(H)\ge 4$
having an $A-$trail $L$, then  $L$ defines
uniquely a quasi spanning tree of faces. {Conversely, a $($quasi$)$ spanning tree of faces $H_{\mathcal{T}}$ defines
uniquely an $A-$trail in $H_\mathcal{T}$ $($where $\mathcal{T}$ is defined as in the paragraph preceding Definition~$\ref{DEF:QST})$.}
\end{observation}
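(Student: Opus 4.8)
The plan is to analyze both implications through the \emph{transition system} that an $A-$trail induces at each vertex, together with the fact that an eulerian plane graph admits a $2-$face-colouring (equivalently, its dual is bipartite); fix such a colouring with colours $1$ and $2$. The key local observation is that at a vertex $v$ the $A-$trail condition of Definition~\ref{DEF:A-trail}(i) forces any two consecutive edges to bound a common face, so the transition at $v$ must pair each edge with a cyclically neighbouring one; there are exactly two such perfect pairings of the $\deg_H(v)$ edges, namely ``turn at the $1-$corners'' or ``turn at the $2-$corners''. Thus an $A-$trail assigns to every vertex a turning colour $c(v)\in\{1,2\}$, and after splitting (detaching) each vertex according to its pairing the trail becomes a disjoint union of simple closed curves; $L$ is a single $A-$trail precisely when this union is one Jordan curve.

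For the forward implication I would choose the $2-$face-colouring so that the unbounded face receives colour $2$, and set $\mathcal{T}$ to be the set of (bounded) $1-$faces, $U:=\{v:c(v)=2\}$ the proper vertices, and the remaining vertices the quasi ones. The three hypotheses preceding Definition~\ref{DEF:QST} are then routine: two $1-$faces share no edge, so the boundaries in $\mathcal{T}$ are pairwise edge-disjoint; since $\delta(H)\ge 4$ every vertex meets at least two $1-$faces, so $\mathcal{T}$ covers $V(H)$; and the number of $1-$faces at any $x$ equals exactly $\frac{1}{2}\deg_H(x)$, which is the degree condition of Definition~\ref{DEF:QST} (in particular it holds at every quasi vertex). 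Since $c(v)$, hence $U$ and $\mathcal{T}$, are read directly off $L$, the construction is canonical, which yields the asserted uniqueness.

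It remains to verify the tree condition on $\mathcal{R}(U,\mathcal{T})$, and this is where the two directions meet. Detaching at $v$ with $c(v)=2$ merges the $1-$faces incident to $v$, whereas $c(v)=1$ keeps them apart; hence the region swept out on the $1-$side is the union of the $\mathcal{T}$-faces glued exactly at the proper vertices, and $L$ being a single Jordan curve is equivalent to this region (and its complement) being a disk. I would translate ``disk'' into the statement that the bipartite incidence graph between the gluing vertices and the $\mathcal{T}$-faces, which is precisely $\mathcal{R}(U,\mathcal{T})=\langle U\cup\mathcal{T}\rangle_{\mathcal{R}(H)}$, is connected and acyclic, i.e.\ a tree. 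For the converse I would run this backwards: given $H_{\mathcal{T}}$ I first note that it is connected and eulerian, that at every vertex the $\mathcal{T}$-faces occupy alternate corners (they are $\frac{1}{2}\deg_{H_{\mathcal{T}}}(x)$ in number and pairwise non-adjacent), so $\mathcal{T}$ is one colour class of a $2-$face-colouring of $H_{\mathcal{T}}$; defining the transition ``turn at the non-$\mathcal{T}$ corners'' at every vertex then merges all $\mathcal{T}$-faces, and the tree hypothesis guarantees a single closed trail, which is an $A-$trail by construction. Uniqueness follows because the only other admissible transition at a vertex carrying at least two $\mathcal{T}$-faces cuts the radial tree there and splits the trail.

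The step I expect to be the genuine obstacle is the equivalence ``$\mathcal{R}(U,\mathcal{T})$ is a tree if and only if $L$ is a single closed trail'', because the radial graph deliberately omits the quasi vertices even though these, being of full degree in $H_{\mathcal{T}}$, also take part in the merging of the $\mathcal{T}$-faces. Making the Jordan-curve/region argument rigorous therefore requires an Euler-characteristic count for $H_{\mathcal{T}}$ showing that a quasi (full) vertex contributes no independent cycle to the incidence structure, so that connectivity and acyclicity are already detected on $U$ alone; I would carry this out by comparing $|E(H_{\mathcal{T}})|=\sum_{F\in\mathcal{T}}|E(F)|$ with the edge count of a tree on $U\cup\mathcal{T}$ and using the degree identity at the quasi vertices to absorb their contribution.
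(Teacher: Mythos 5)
Your forward direction follows essentially the same route as the paper's (terse) proof, which fixes a $2-$face-colouring, takes the $A-$partition of Definition~\ref{DEF:A-trail}(iii) --- your turning colour $c(v)$ in disguise --- and lets $\mathcal{T}$ be the bounded faces of one colour class, the vertices turning towards the other colour being the proper ones (your colour conventions are mirrored relative to the paper's, which is immaterial). Your homotopy/Euler-characteristic plan for the equivalence between ``$L$ is a single closed trail'' and ``$\mathcal{R}(U,\mathcal{T})$ is a tree'' is a legitimate way to fill in what the paper leaves implicit, and the worry you flag about quasi vertices resolves itself for exactly the reason you suspect: the detached trail performs no gluing of $\mathcal{T}-$faces at a quasi vertex, so the abstract complex of closed $\mathcal{T}-$disks identified only at proper vertices deformation retracts onto the incidence graph $\mathcal{R}(U,\mathcal{T})$, and quasi vertices never enter the count.

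However, your converse contains a genuine error: you prescribe the transition ``turn at the non-$\mathcal{T}$ corners'' \emph{at every vertex}. This merges the $\mathcal{T}-$faces at quasi vertices as well, and it fails whenever a quasi vertex lies on two or more faces of $\mathcal{T}$ --- the typical situation for a quasi spanning tree; your rule is correct only in the special case $U=V(H)$ of a spanning tree of faces. Concretely, let $H$ consist of four internally disjoint $x$--$y$ paths $xay$, $xby$, $xcy$, $xdy$ embedded in this rotational order, let $\mathcal{T}=\{Q_1,Q_2\}$ with $Q_1=xaybx$ and $Q_2=xcydx$, and let $U=\{x\}$. The boundaries in $\mathcal{T}$ are edge-disjoint bounded faces covering $V(H)$, the degree condition of Definition~\ref{DEF:QST} holds at the quasi vertex $y$ (it lies on $2=\frac{1}{2}\deg_H(y)$ faces of $\mathcal{T}$), and $\mathcal{R}(U,\mathcal{T})$ is the path $Q_1xQ_2$, a tree; so $H_{\mathcal{T}}=H$ is a quasi spanning tree of faces. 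But your transition system splits into the two disjoint closed curves $xaydx$ and $xbycx$ and yields no eulerian trail at all. The correct rule --- the one your own forward-direction analysis and your closing uniqueness sentence actually presuppose --- is to merge only at proper vertices: turn at the non-$\mathcal{T}$ corners at the vertices of $U$ and at the $\mathcal{T}-$corners at the quasi vertices (here the pairs $\{ay,yb\}$ and $\{cy,yd\}$ at $y$), which produces the single $A-$trail $xaybxcydx$. With this correction the rest of your argument closes as planned: each merge at a quasi vertex would add an independent cycle to the gluing complex and hence an extra boundary curve, which is precisely why Definition~\ref{DEF:QST} imposes the tree condition on $U$ alone.
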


\begin{proof}{Start with a
$2-$face-coloring of $H$ with colors $1$ and $2$, suppose the outer face of $H$ is colored $1$. To show the first part of Observation 1, let $V_L(H)=\{V_1, V_2\}$ be the  $A-$partition of $V(H)$ induced by $L$ $($see {\rm Definition~\ref{DEF:A-trail}.(iii))}.
Now, the set of all $2-$faces  defines a quasi spanning tree of
faces $H_{\mathcal{T}}$ with $V_1$ being the set of all quasi vertices of
$H_{\mathcal{T}}$. The second part of Observation 1 follows similarly.
}\end{proof}

The aforementioned relation between the concepts of $A-$trail  and
(quasi) spanning tree of faces is not a coincidence. In fact, it had been shown
(\cite[pp.~$VI.112-VI.113$]{Fleischner} -- {see also Theorem A})
that
\vskip 0.4cm
$\bullet$ {\it Barnette's Conjecture is true if and only if every simple $3-$connected eulerian
triangulation of the plane admits an $A-$trail.}\\

We point out, however, that the concept of (quasi) spanning tree of faces
is a somewhat more general tool to deal with hamiltonian cycles in
plane {cubic} graphs, than the concept of $A-$trails. We are thus focusing our considerations below on the
complexity of the existence of $A-$trails and (quasi) spanning trees
of faces in plane (eulerian) graphs.

Parts of this paper are the result of extracting
some results and their proofs of~\cite{Feder};
 they have not been published in a refereed journal yet.
Moreover, we relate some of the results of this paper to the theory of
$A-$trails, as developed in~\cite{Fleischner}.

Next we list some results  of the preceding joint paper \cite{Bagheri}
which will be essential for the current paper.
In general, when we say that $F$ is an $\mathcal{X}-$face ($\mathcal{X}^{c}-$face), we mean that
  $F\in \mathcal{X}\ (F\notin \mathcal{X})$.

Consider   a $3-$connected cubic plane graph $G$
with a facial $2-$factor $\mathcal{Q}$.
Given a quasi spanning tree of faces $H_{\mathcal{T}}$ in the reduced
graph $H=G/\mathcal{Q}$, we assume the outer face is not in $\mathcal{T}$, and traverse the $A-$trail of $H_{\mathcal{T}}$ (see the second part of Observation~\ref{obs:A-trail}), to obtain a hamiltonian
cycle  $C$  in $G$ such that the faces of $\mathcal{Q}$ corresponding to the proper vertices of $H_{\mathcal{T}}$ lie inside of  $C$  whereas the faces of $\mathcal{Q}$ corresponding to quasi vertices of $H_{\mathcal{T}}$ lie outside of  $C$. In fact the following is true.

\begin{oldtheorem}~$(${\rm\cite[Proposition~1]{Bagheri}}$)$
\label{PR:1}
Let $G$ be   a $3-$connected cubic plane graph $G$
with a facial $2-$factor $\mathcal{Q}$.
  Then, the reduced
graph $H=G/\mathcal{Q}$ has a quasi spanning tree of faces, $H_{\mathcal{T}}$, with
 the outer face not
in $\mathcal{T}$, if and only if $G$
has a hamiltonian cycle $C$ with  the outer $\mathcal{Q}^{c}-$face outside of $C$,
with all $\mathcal{Q}-$faces corresponding to proper vertices
of $H_{\mathcal{T}}$
inside of $C$, with
all $\mathcal{Q}-$faces corresponding to quasi vertices
of $H_{\mathcal{T}}$
outside of $C$, and such that no two $\mathcal{Q}^{c}-$faces sharing an edge are both
inside of $C$.
\end{oldtheorem}

\begin{oldtheorem}~$(${\rm\cite[Corollary~7]{Bagheri}}$)$\label{cor:4con-eulerian-triangulation}
Every simple $4-$connected eulerian triangulation of the plane has a
quasi  spanning tree of faces.
\end{oldtheorem}

{Note, however, that it is an open problem whether such triangulations have an $A$--trail.}
\vskip 0.4cm
{Additionally,} two old results are listed below.
\begin{oldtheorem}~$(${\rm\cite[Corollary~3]{Andersen95}}$)$\label{CR:A-Tr.NP-C}
The  problem of deciding whether a planar eulerian graph admits an $A-$trail remains NP-complete for $3-$connected graphs having only $3-$cycles and $4-$cycles as face boundaries, and for which all faces with $4-$cycles as boundaries have the same color in the $2-$face-coloring.
\end{oldtheorem}

In contrast,  Andersen et al. in~\cite{Andersen98} gave a polynomial algorithm for
finding $A-$trails in simple $2-$connected outerplane eulerian graphs.

\begin{oldtheorem}~$(${\rm\cite[Theorem~23]{FleischnerEnvelope}}$)$\label{TH:Lf.NP-C}
The decision problem of whether the leapfrog extension of a plane cubic graph
with multiple edges is hamiltonian is NP-complete.
\end{oldtheorem}

\section{Graph theoretical results and polynomially solvable problems}\label{sec2}

In proving   Propositions $3$ and $6$ and Theorem $11$
in~\cite{Bagheri}, we used implicitly some
algorithms to construct a (quasi) spanning tree of faces. In all of them,
we find some triangular face such that the  graph
resulting from the
contraction of this face, still satisfies the
hypothesis of the respective result.
 By repeating this
process, finally the contracted faces together  with a
special face form a (quasi) spanning tree of faces.
 Note that it is possible to identify the contractible faces in linear time, since every simple plane graph has $O(n)$ faces, where $n$ is the order of a graph  under consideration.
 Therefore, our
algorithms for finding a quasi spanning tree of faces   in~\cite{Bagheri}
are  polynomial.

We show next that one can decide in polynomial time whether the reduced graph $H$
with   a set $\mathcal{D}$
 of
 edge-disjoint  faces in $H$ has a spanning tree of
faces in $\mathcal{D}$ provided every face boundary of $H$ is a digon or a triangle.
\vskip 0.4cm

{\bf The Spanning Tree Parity Problem:} {\it
Given a graph $G$ and a collection of {disjoint} pairs of edges,
$\{\{e_i,f_i\}\ |\ i=1,\ldots, k\}$. {\sf The Spanning Tree Parity Problem (STPP)}
asks whether $G$ has a spanning tree $T$ satisfying $|\{e_i,f_i\}\cap E(T)|\in \{0,2\}$, for each $i=1,\ldots,k$.}
\vskip 0.4cm
Note that the STPP is
solvable in polynomial time (see~{\rm\cite{Gabow,Lovasz}}).

\begin{theorem}\label{TH:3}
Let $G$ be a $3-$connected cubic plane graph
having a facial $2-$factor $\mathcal{Q}$ and $H=G/\mathcal{Q}$.
 Let $\mathcal{D}$ be a set
 of
 edge-disjoint faces in $H$ such that $\mathcal{D}$ covers all of $V(H)$ and such that
all faces in $\mathcal{D}$ are either digons or triangles. Then we can decide in polynomial time whether $H$ has a spanning
tree of faces in $\mathcal{D}$, yielding a hamiltonian cycle for $G$, by a spanning tree
parity algorithm.
\end{theorem}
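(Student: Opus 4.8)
\medskip

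The plan is to reduce the existence of a spanning tree of faces in $\mathcal{D}$ to a single call of a spanning tree parity algorithm. First I would record the reformulation that makes the statement meaningful: by Definition~\ref{DEF:QST} a spanning tree of faces is a choice of $\mathcal{T}\subseteq\mathcal{D}$ covering $V(H)$ for which the restricted radial graph $\mathcal{R}(V(H),\mathcal{T})$ is a tree, and by Observation~\ref{obs:A-trail} together with Theorem~\ref{PR:1} any such $\mathcal{T}$ indeed produces a hamiltonian cycle of $G$ (since $\mathcal{D}$ consists of bounded faces, the outer face is not in $\mathcal{T}$, as Theorem~\ref{PR:1} requires). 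Thus it suffices to decide, in polynomial time, whether such a $\mathcal{T}$ exists.

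Next I would build the STPP instance. On the vertex set $V(H)$ form an auxiliary multigraph $G'$ as follows: every digon $\{a,b\}\in\mathcal{D}$ contributes a single, unpaired (free) edge $ab$, while every triangle $\{a,b,c\}\in\mathcal{D}$ contributes two edges, say $ab$ and $bc$, which are declared a parity pair. I then run the spanning tree parity algorithm on $G'$, asking for a spanning tree $T$ with $|\{ab,bc\}\cap E(T)|\in\{0,2\}$ for every triangle pair. The intended correspondence is: a triangle lies in $\mathcal{T}$ exactly when both of its paired edges lie in $T$, a digon lies in $\mathcal{T}$ exactly when its edge lies in $T$, and the third edge of each chosen triangle (resp. the parallel edge of each chosen digon) is the edge that closes the corresponding facial cycle of $H_{\mathcal{T}}$.

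The core of the proof is the equivalence \emph{$\mathcal{R}(V(H),\mathcal{T})$ is a tree} if and only if \emph{the corresponding edges form a spanning tree of $G'$}. I would establish it by the observation that replacing each selected face $F$ by any spanning tree of its vertex set (a single edge for a digon, a two-edge path for a triangle) deletes the face-vertex and leaves both the connectivity among $V(H)$ and the cyclomatic number unchanged; hence the bipartite radial graph is a tree precisely when the resulting graph on $V(H)$ is a spanning tree. Because a selected triangle contributes the same connectivity no matter which two of its three edges are used, fixing the pair $\{ab,bc\}$ loses no solution, and the $0$-or-$2$ parity constraint is exactly what forbids a half-chosen triangle. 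Finally, since the spanning tree parity problem is solvable in polynomial time (\cite{Gabow,Lovasz}) and $G'$ has $|V(H)|$ vertices and only $O(|V(H)|)$ edges (the faces of $\mathcal{D}$ are edge-disjoint in a planar graph), the whole procedure runs in polynomial time.

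The step I expect to be the main obstacle is the forward direction of this equivalence, namely checking that a parity-respecting spanning tree $T$ of $G'$ reconstructs a \emph{genuine} spanning tree of faces rather than merely a spanning connected subgraph: one must verify that adding back the closing edges creates no \emph{mixed} cycle running through several faces, so that the only cycles of $H_{\mathcal{T}}$ are the chosen face boundaries. I would handle this through the cyclomatic/connectivity bookkeeping above --- a second $c$--$a$ path in $H_{\mathcal{T}}$ avoiding the closing edge $ca$ of a chosen triangle would translate into a second $c$--$a$ path in $\mathcal{R}(V(H),\mathcal{T})$, hence into a cycle of the radial graph, contradicting that it is a tree.
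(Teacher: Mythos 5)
Your proposal is correct and follows essentially the same route as the paper: one free edge per digon, a fixed parity pair $\{xy,yz\}$ per triangle in an auxiliary (multi)graph on $V(H)$, and a single call to a polynomial-time STPP algorithm (\cite{Gabow,Lovasz}). The only difference is that you explicitly verify the correspondence between spanning trees of faces and parity-respecting spanning trees (via the star-to-path replacement preserving connectivity and the vertex/edge count, so that no solution is lost by fixing the pair), a step the paper's proof asserts without detail.
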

\begin{proof}{ Construct a graph $H^{'}$ related to $H$ as follows. $V(H^{'})=V(H)$. If $xyx$ is a digon in $\mathcal{D}$, then let  $xy$ be an edge in $H^{'}$. If $xyzx$ is a triangle
in $\mathcal{D}$, then put edges $xy$ and $yz$ in $H^{'}$
(the naming of the  vertices of the triangle with the symbols $x,y,z$ is arbitrary but fixed). A spanning tree of faces in $\mathcal{D}$ for $H$ then corresponds
to a  spanning tree $T$ in $H^{'}$
satisfying $|\{xy,yz\}\cap E(T)|\in \{0,2\}$, for each
 triangle $xyzx$ in $\mathcal{D}$. Thus, these conditions on pairs of edges in $H^{'}$
transform the problem of finding a spanning tree of faces
 in $\mathcal{D}$ for $H$, yielding a hamiltonian cycle for $G$ by Theorem~\ref{PR:1}, equivalently in polynomial time into an STPP in $H^{'}$.
}\end{proof}

If $\mathcal{D}$ contains faces with four or more sides, say a face $xyztx$, then we could include three
edges linking these four vertices, say $xy,\ yz,$ and $zt$, and require that a spanning tree  must contain
either all three or none of these three edges. Such a Spanning Tree Triarity Problem (STTP), as we
shall  see later in Theorem~\ref{TH:4}, turns out to be NP-complete.

The following result demonstrates the close relationship between $A-$trails and spanning trees of faces vis-a-vis hamiltonian cycles.

\begin{theorem}\label{PR:3}  Let $G$ be a Barnette graph whose faces are $3-$colored with color set $\{1,2,3\}$ and suppose without loss of generality that the outer face of $G$ is a $3-$face. The following statements are equivalent.

\begin{description}
\itemsep=0.75pt
\item[$(i)$]  $G$ has a hamiltonian cycle $C$ with the $2-$faces  lying inside of $C$, the $3-$faces lying outside of $C$, and $1-$faces on either side;
\item[$(ii)$] the reduced graph $H$ obtained by contracting  the
$1-$faces has an $A-$trail;
\item[$(iii)$] the reduced graph $H^{'}$ obtained by
contracting  the $2-$faces has a spanning tree of $1-$faces;
\item[$(iv)$] the reduced graph $H^{''}$
obtained by contracting  the $3-$faces has a spanning tree of $1-$faces.
\end{description}
\end{theorem}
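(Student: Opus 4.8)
The plan is to derive all three equivalences from Theorem~\ref{PR:1}, applied to the three facial $2$-factors supplied by the face $3$-colouring, together with Observation~\ref{obs:A-trail}. First I would record the structural setup. Since $G$ is cubic and the $3$-face-colouring is proper, every vertex lies on exactly one face of each colour, so each colour class is a facial $2$-factor $\mathcal{Q}_i$ whose boundaries partition $V(G)$. Contracting $\mathcal{Q}_i$ deletes exactly the edges lying on an $i$-face and leaves the edges separating the two other colours; thus $H=G/\mathcal{Q}_1$ has the $2$-$3$ edges and the $2$- and $3$-faces, $H'=G/\mathcal{Q}_2$ has the $1$-$3$ edges and the $1$- and $3$-faces, and $H''=G/\mathcal{Q}_3$ has the $1$-$2$ edges and the $1$- and $2$-faces. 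In each reduced graph two faces are adjacent precisely across an edge joining their two colours, so the surviving $2$-colouring is a proper $2$-face-colouring; and since $G$ is bipartite all faces have even length, which makes these reduced graphs eulerian and gives $\delta(H)\ge 4$, as required by Observation~\ref{obs:A-trail}.

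For (i)$\iff$(ii) I would apply Theorem~\ref{PR:1} with $\mathcal{Q}=\mathcal{Q}_1$; the outer $3$-face is a $\mathcal{Q}_1^{c}$-face, as the theorem demands. By Observation~\ref{obs:A-trail} an $A$-trail of $H$ is the same datum as a quasi spanning tree of faces $H_{\mathcal{T}}$, and taking $\mathcal{T}$ to be the $2$-faces (equivalently, the faces of $H$ lying inside) Theorem~\ref{PR:1} returns a hamiltonian cycle $C$ whose inside faces among $\mathcal{F}(H)$ are exactly the $2$-faces and whose outside faces are exactly the $3$-faces, while the $\mathcal{Q}_1$-faces split, according to proper versus quasi vertices, into those inside and those outside. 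This is exactly (i), and the reasoning reverses.

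For (i)$\iff$(iii) I would apply Theorem~\ref{PR:1} with $\mathcal{Q}=\mathcal{Q}_2$ (the outer $3$-face is again a $\mathcal{Q}_2^{c}$-face). The set $\mathcal{T}'$ produced by the theorem is the set of faces of $H'$ lying inside $C$, namely the inside $1$-faces together with the inside $3$-faces. The crux is a twofold translation: \emph{$\mathcal{T}'$ uses only $1$-faces} is equivalent to \emph{no $3$-face lies inside $C$}, i.e.\ all $3$-faces lie outside; and \emph{a spanning, not merely quasi, tree of faces} is equivalent to \emph{the absence of quasi vertices}, i.e.\ all $\mathcal{Q}_2$-faces ($=2$-faces) lie inside $C$. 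Together these two conditions say precisely that $C$ has all $2$-faces inside and all $3$-faces outside, which is (i).

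Statement (iv) is symmetric to (iii) under interchanging the colours $2$ and $3$, but now the outer $3$-face is a $\mathcal{Q}_3$-face, so Theorem~\ref{PR:1} does not apply directly, and this is where the main obstacle lies. I would first re-embed $G$ on the sphere with a $2$-face as the outer face; this is a legitimate $\mathcal{Q}_3^{c}$ outer face and changes neither the cycle $C$ nor the partition of $\mathcal{F}(G)$ into the two sides of $C$, only interchanging the labels ``inside'' and ``outside''. Running the argument of (iii) for $\mathcal{Q}_3$ in this embedding yields a hamiltonian cycle with all $3$-faces inside and all $2$-faces outside relative to the new root; translating back to the original embedding interchanges the two sides and returns all $3$-faces outside and all $2$-faces inside, that is (i). The delicate bookkeeping here is to verify that re-rooting the spherical embedding genuinely preserves $C$ and the relation ``the $2$- and $3$-faces lie on opposite sides'' while swapping only the two side-labels; more broadly, the crux throughout is the faithful dictionary between the combinatorial object in the reduced graph---a (quasi) spanning tree of faces, its face-set $\mathcal{T}$, and its proper and quasi vertices (Definition~\ref{DEF:QST})---and the geometric partition of $\mathcal{F}(G)$ by $C$. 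Once this dictionary is pinned down, each of the three equivalences becomes a direct reading of Theorem~\ref{PR:1}.
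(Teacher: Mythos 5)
Your proposal is correct in substance but follows a genuinely different route from the paper. The paper proves the equivalences directly: for $(i)\Leftrightarrow(ii)$ it analyzes the closed trail $T_C$ induced by $C$ in $H$, showing it is eulerian and that $E(F_1)\cap E(C)$ is a matching for every $1$-face $F_1$, whence any two consecutive edges of $T_C$ lie on a common face boundary; for $(i)\Rightarrow(iii)$ it observes that $G_{int}=C\cup int(C)$ is a spanning outerplane subgraph of $G$ whose weak dual is a tree, so the contracted image of the inside $1$-faces is a spanning tree of faces of $H^{'}$; and $(i)\Leftrightarrow(iv)$ is handled analogously with $G_{ext}=C\cup ext(C)$. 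You instead invoke Theorem~\ref{PR:1} three times, once for each facial $2$-factor supplied by the $3$-face-colouring, with Observation~\ref{obs:A-trail} translating between $A$-trails and quasi spanning trees (note that with $\mathcal{T}$ equal to all $2$-faces one indeed has $H_{\mathcal{T}}=H$, so the $A$-trail lives in all of $H$). Your reduction is more modular and makes the symmetry among $(ii)$, $(iii)$, $(iv)$ transparent, and your re-rooting of the spherical embedding for $(iv)$ is a legitimate, indeed more explicit, substitute for the paper's terse ``analogously''.

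One caution, since it is where your argument carries its real weight: the dictionary you rely on is strictly stronger than the literal statement of Theorem~\ref{PR:1}. As stated, that theorem guarantees a hamiltonian cycle with the proper/quasi placement of the $\mathcal{Q}$-faces and such that no two $\mathcal{Q}^{c}$-faces sharing an edge lie inside $C$; it does \emph{not} say that $\mathcal{T}$ consists exactly of the $\mathcal{Q}^{c}$-faces inside $C$. In case $(iii)$, for instance, a spanning tree of $1$-faces yields via the literal statement a cycle with all $2$-faces inside, but nothing in the listed conditions excludes an inside $3$-face (such a configuration is locally consistent: its neighbouring $1$-faces would simply lie outside, with the corresponding boundary edges on $C$). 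So $(iii)\Rightarrow(i)$ does not follow from Theorem~\ref{PR:1} used purely as a black box; you must either appeal to the construction inside its proof (traversing the $A$-trail of $H_{\mathcal{T}}$, so that $C$ bounds precisely the union of the $\mathcal{T}$-faces and the proper $\mathcal{Q}$-faces), or prove the dictionary yourself --- and the natural proof of its backward half, namely that $\mathcal{T}:=\{$inside $\mathcal{Q}^{c}$-faces$\}$ yields a radial tree, is exactly the paper's outerplanarity/weak-dual argument. You correctly flag this dictionary as the crux, but in your write-up it is asserted rather than established; pinning it down is the one step still owed.
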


\begin{proof}
$(i)\Rightarrow(ii):$ Let $T_C$ be a
closed trail in $H$ induced by
hamiltonian cycle $C$ of $G$ having the properties described in $(i)$.  $T_C$ is an eulerian trail, otherwise there are two  faces of $G$ with two different colors $2$ and $3$ lying on one side of $C$.
This  obvious contradiction to $(i)$ guaranties that  $T_C$ is an eulerian trail. Since all $2-$faces ($3-$faces) of $G$ are lying inside (outside) of $C$, for every $1-$face $F_1$ of $G$ we conclude that $E(F_1)\cap E(C)$ is a matching.
Therefore, every pair of consecutive edges  of $T_C$   corresponds to  a path of length  three  in $C$ such that the central edge of this path from one side belongs to
a $1-$face boundary, and from the other side all three edges belong to an $i-$face boundary, $i\in \{2,3\}$.
Thus, any two consecutive edges of $T_C$ belong to a face boundary in $H$ and so $T_C$ is an $A-$trail.

\medskip\noindent
$(ii)\Rightarrow(i):$ The $3-$face-coloring of $G$ induces a $2-$face-coloring in $H$
using colors $2$ and $3$ and such that the outer face of $H$ is a $3-$face. Now it is easy to see that any $A-$trail of $H$ can be transformed into a hamiltonian cycle $C$ of $G$ with the $2-$faces  lying inside of $C$, the $3-$faces lying outside of $C$, and $1-$faces lying on either side.

\medskip\noindent
$(i)\Rightarrow(iii):$
Now consider the  $2-$face-coloring of $H^{'}$ induced by the $3-$face-coloring of $G$. Let $U= V(H^{'})$ be the vertex set corresponding to the $2-$faces. Also, let $\mathcal{T}$ be the set of  $1-$faces
of $H^{'}$ corresponding to the $1-$faces in $int(C)$.

\smallskip
Observe that $G_{int}:=C\cup int(C)$
is a spanning outerplane subgraph of $G$, and that the weak dual
(the subgraph of the dual graph whose vertices correspond to the bounded faces) of
$G_{int}$ is a tree (see~\cite{Fleischner}). Therefore,
$H^{'}_{int}\subset H^{'}$ being the reduced graph of $G_{int}$
after contracting the $2-$faces, is a spanning tree of faces in $H^{'}$.

\medskip\noindent
$(iii)\Rightarrow(i):$   Suppose $H^{'}$ has a spanning tree of
 $1-$faces $H^{'}_{\mathcal{T}}$.
 Then $H^{'}_{\mathcal{T}}$ has a unique $A-$trail which can be transformed    into a hamiltonian cycle $C$ of $G$ such that
 the $2-$faces
 (corresponding to $V(H^{'})$) lie in $int(C)$ and
the  corresponding $3-$faces   lie in $ext(C)$.

\medskip
The equivalence of $(i)$ and $(iv)$ is established analogously by looking at $G_{ext}:=C\cup ext(C)$ which is also an outerplanar graph.
\end{proof}

An application of Theorems~\ref{TH:3} and~\ref{PR:3}
yields the following.

\begin{corollary}\label{CR:2} Let $G$ be a Barnette  graph
with a $3-$face-coloring with color set $\{1,2,
3\}$, and let $H$ be the reduced graph obtained by contracting
 the $1-$faces. Suppose all vertices of $H$ have degree $4$ or $6$.
Then one can decide in polynomial time whether $H$ has an $A-$trail
 which in turn yields a hamiltonian cycle in $G$.
\end{corollary}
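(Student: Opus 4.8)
The plan is to reduce the existence of an $A$-trail in $H$ to a spanning-tree-of-faces problem to which the spanning-tree-parity algorithm of Theorem~\ref{TH:3} applies. By the equivalence $(ii)\Leftrightarrow(iii)$ of Theorem~\ref{PR:3}, $H$ has an $A$-trail if and only if the reduced graph $H'$, obtained from $G$ by contracting the $2$-faces, has a spanning tree of $1$-faces. Theorem~\ref{TH:3} decides the latter in polynomial time \emph{provided} the $1$-faces appear as digons or triangles in $H'$, so the whole proof amounts to showing that the degree hypothesis on $H$ is exactly what forces this digon/triangle condition.

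First I would translate the degree hypothesis into a statement about $G$. Since $H=G/\mathcal{Q}$ is obtained by contracting the $1$-faces and $G$ is cubic, each vertex on the boundary of a $1$-face $F_1$ has precisely two of its three edges on $\partial F_1$ and one edge leaving $F_1$; hence the degree in $H$ of the vertex representing $F_1$ equals the number of vertices of $F_1$, i.e.\ the length of $F_1$. Thus \emph{``every vertex of $H$ has degree $4$ or $6$''} is equivalent to \emph{``every $1$-face of $G$ is a quadrilateral or a hexagon.''}

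Next I would determine the shape of the $1$-faces in $H'$. Because the $3$-face-coloring is proper and $G$ is cubic, exactly one face of each color meets at every vertex; in particular the $2$-faces are pairwise edge-disjoint and cover $V(G)$, so they form a facial $2$-factor $\mathcal{Q}_2$ with $H'=G/\mathcal{Q}_2$. Traversing the boundary of a $1$-face $F_1$ and using that at each of its vertices one $2$-face and one $3$-face meet, the faces bordering $F_1$ across its consecutive edges must alternate between color $2$ and color $3$. Contracting the $2$-faces then identifies the two endpoints of every edge of $F_1$ that borders a $2$-face: a quadrilateral $1$-face (bordering two $2$-faces) collapses to a digon, and a hexagonal $1$-face (bordering three $2$-faces) collapses to a triangle. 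Hence, under the degree hypothesis, every member of the family $\mathcal{D}$ of $1$-faces is a digon or a triangle in $H'$.

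Finally, the $1$-faces are pairwise edge-disjoint (no two faces of the same color are adjacent) and cover $V(H')$, so Theorem~\ref{TH:3}, applied to $G$ with facial $2$-factor $\mathcal{Q}_2$, reduced graph $H'$, and $\mathcal{D}$ the set of $1$-faces, decides in polynomial time via a spanning-tree-parity algorithm whether $H'$ has a spanning tree of $1$-faces. By Theorem~\ref{PR:3} this is exactly the existence of an $A$-trail in $H$, and a positive answer produces the corresponding hamiltonian cycle in $G$ (cf.\ Observation~\ref{obs:A-trail} and Theorem~\ref{PR:1}). I expect the only delicate step to be the alternation argument around $\partial F_1$ combined with the degree-length correspondence, since it is precisely this that guarantees the digon/triangle hypothesis needed to invoke Theorem~\ref{TH:3}.
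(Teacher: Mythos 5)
Your proposal is correct and follows essentially the same route as the paper: pass to the reduced graph $H'$ obtained by contracting the $2$-faces, use the equivalence $(ii)\Leftrightarrow(iii)$ of Theorem~\ref{PR:3} to trade the $A$-trail in $H$ for a spanning tree of $1$-faces in $H'$, and invoke Theorem~\ref{TH:3} since the degree-$4$/degree-$6$ hypothesis forces each $1$-face to become a digon or a triangle in $H'$. Your explicit verification of that last step (the degree-equals-face-length correspondence and the color-$2$/color-$3$ alternation around each $1$-face) is detail the paper asserts without proof, so you have, if anything, supplied more justification at the one point the paper leaves implicit.
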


\begin{proof}{ Let $H$ be the reduced graph of $G$ obtained by contracting the $1-$faces, and let $H^{'}$ be the reduced graph of $G$ obtained by contracting the $2-$faces instead
of the $1-$faces. Then  each $1-$face of $G$ yields a
digon or triangle in $H^{'}$. By Theorem~\ref{PR:3}, an
$A-$trail  in $H$ corresponds to a spanning tree of $1-$faces in $H^{'}$ and vice versa. Since all
$1-$faces  of $H^{'}$ are either digons or triangles, one can decide
in polynomial time by Theorem~\ref{TH:3} whether such a spanning tree
of $1-$faces exists in $H^{'}$.
}\end{proof}

By Observation~\ref{obs:A-trail}, we have the following theorem
in which we make use of the fact that an
(eulerian) triangulation of the plane admits two interpretations, namely: as the dual of a plane cubic   (bipartite) graph, and as the contraction of a facial (even) $2-$factor $Q$ in $G$ whose faces in $Q^c$ are hexagons.
\begin{theorem}\label{TH:Herbert}
	 Let $G$ be a  Barnette graph and let $\mathcal{F}$ be the set of its faces. Let
$\mathcal{Q}_{\mathcal{F}}$ be the facial $2-$factor of $Lf(G)$
corresponding to $\mathcal{F}$ and let the color classes of the $3-$face-coloring of $Lf(G)$ be denoted by $F_1$, $F_2$, and $F_3$ such that $F_3 = \mathcal{Q}_{\mathcal{F}}$, and thus $F_1$, $F_2$ translates into a $2-$face-coloring of $Lf(G)/\mathcal{Q}_{\mathcal{F}}$  denoting the corresponding sets of faces by $F1$, $F2$ and whose vertex set $($corresponding to $F_3)$ be denoted by $V_3$.
$G^{*}$ denotes the dual of $G$. Then the following is true.
\begin{description}
\itemsep=0.9pt
\item[$(1)$]
 $G^{*} = Lf(G)/\mathcal{Q}_{\mathcal{F}}.$
\item[$(2)$] $G$ is hamiltonian if and only if $Lf(G)$ has a hamiltonian cycle $C$ such that $int(C) = F_1\cup F_3^{'}$ and $ext(C) = F_2\cup F_3^{''}$ where $F_3 = F_3^{'}\dot{\cup} F_3^{''}$. \medskip\\
Statement $(2)$ is equivalent to
\item[$(3)$] $G^{*}$ has a non-separating  $A-$trail if and only if  there is a partition $V_3=V_3^{'}\dot{\cup} V_3^{''}$ such that $Lf(G)/\mathcal{Q}_{\mathcal{F}}$ has a quasi spanning tree of faces containing all of $F1$ and where $V_3^{'}$ is its set of proper vertices and $V_3^{''}$ is its set of quasi vertices.
$(V_3^{'}$ and $V_3^{''}$ are the vertex sets in  $Lf(G)/\mathcal{Q}_{\mathcal{F}}$ corresponding to $F_3^{'}$ and $F_3^{''}$, respectively, - see $(2)$ above$)$.
\end{description}
\end{theorem}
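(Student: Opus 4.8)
Write $H=G^{*}$. The plan is to prove $(1)$ by a direct structural identification and then to read off $(2)$, $(3)$ and their equivalence from the two ``dual'' interpretations of $H$, using Theorem~\ref{dualAtrail} on one side and Theorem~\ref{PR:1} (applied to $Lf(G)$) on the other. For $(1)$ I would first describe the faces of $Lf(G)$: since $G$ is cubic, each hexagon $C_6(v)$ meets its three neighbouring hexagons along three of its six edges (the ``matching'' edges, one for each of the three edges of $G$ at $v$), while its remaining three edges each lie on a unique non-hexagonal face, and these non-hexagonal faces are precisely the members of $F_3=\mathcal{Q}_{\mathcal{F}}$, in bijection with $\mathcal{F}=V(G^{*})$. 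A local count shows that every vertex of $Lf(G)$ lies on exactly two hexagons and one $F_3$-face, so the boundaries in $F_3$ meet every vertex in degree $2$; hence $F_3$ is a facial $2$-factor and the matching edges form the complementary perfect matching. Contracting $\mathcal{Q}_{\mathcal{F}}$ collapses each $F_3$-face to a vertex, giving a vertex set in bijection with $\mathcal{F}$, and the surviving matching edges join two such vertices exactly when the corresponding faces of $G$ share an edge of $G$, which is the adjacency of $G^{*}$; checking that the embedding is preserved yields $Lf(G)/\mathcal{Q}_{\mathcal{F}}=G^{*}$. This is routine bookkeeping, and it also records that $H=G^{*}$ is a simple eulerian triangulation (so $\delta(H)\ge 4$ and Observation~\ref{obs:A-trail} applies), that the faces of $H$ correspond to the hexagons $F_1\cup F_2$ and inherit the $2$-face-colouring $F1,F2$ from the bipartition of $G$, and that $V_3=V(H)$.

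With $(1)$ available I would establish the two ``outer'' equivalences. The left-hand sides of $(2)$ and $(3)$ agree by Theorem~\ref{dualAtrail}: being a $2$-connected plane cubic bipartite graph, $G$ is hamiltonian if and only if $G^{*}$ has a non-separating $A$-trail. The right-hand sides agree by Theorem~\ref{PR:1} applied to the cubic graph $Lf(G)$ with facial $2$-factor $\mathcal{Q}=\mathcal{Q}_{\mathcal{F}}$ and reduced graph $Lf(G)/\mathcal{Q}_{\mathcal{F}}=G^{*}$: a quasi spanning tree of faces of $G^{*}$ (with the outer face excluded) corresponds to a hamiltonian cycle $C$ of $Lf(G)$ having the $F_3$-faces indexed by the proper vertices inside and those indexed by the quasi vertices outside, i.e.\ $F_3'$ inside and $F_3''$ outside. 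The point at which bipartiteness of $G$ enters is that a face of $H$ lies in $\mathcal{T}$ precisely when the corresponding hexagon of $Lf(G)$ lies inside $C$; hence requiring that $\mathcal{T}$ contain all of $F1$ forces every $F_1$-hexagon inside, and then the proviso of Theorem~\ref{PR:1}, that no two edge-adjacent $\mathcal{Q}^{c}$-faces are both inside, together with the fact that every $F_2$-hexagon is edge-adjacent to three $F_1$-hexagons ($G$ being cubic and the two colours being the parts of $G$), forces every $F_2$-hexagon outside. Thus $\mathcal{T}=F1$ exactly and $int(C)=F_1\cup F_3'$, $ext(C)=F_2\cup F_3''$, with $V_3'$ proper and $V_3''$ quasi; the converse reading of Theorem~\ref{PR:1} gives the opposite direction. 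I would also fix the embedding so that the outer hexagon lies in $F_2$, which is compatible with the ``outer face not in $\mathcal{T}$'' convention.

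The remaining and hardest step is the ``inner'' equivalence that makes $(2)$ and $(3)$ genuine biconditionals rather than merely mutually equivalent statements: that a non-separating $A$-trail of $H=G^{*}$ corresponds, under Observation~\ref{obs:A-trail}, to a quasi spanning tree of faces whose face set $\mathcal{T}$ contains all of $F1$, the induced $A$-partition $\{V_1,V_2\}$ becoming the partition $V_3''\,\dot{\cup}\,V_3'$ of $V_3$. Observation~\ref{obs:A-trail} already supplies the bijection between $A$-trails of $H$ and quasi spanning trees of faces, and identifies the proper and quasi vertices with the two blocks of the $A$-partition; what must still be proved is that the defining property of non-separation---every triangular face of $H$ carries at least two consecutive edges of the trail---is exactly the condition forcing $\mathcal{T}$ to meet every $F1$-face. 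I expect this to be the main obstacle: it calls for a careful local analysis, at each vertex of $H$, of how the trail pairs its incident edges relative to the two face-colours, and for a check that the resulting face set is compatible with the tree condition on the restricted radial graph $\mathcal{R}(V_3',\mathcal{T})$. Once this local dictionary is established, combining it with Theorems~\ref{dualAtrail} and~\ref{PR:1} closes the chain leading from $G$ being hamiltonian, through the existence of a non-separating $A$-trail in $G^{*}$ and of a quasi spanning tree of faces of $G^{*}$ containing all of $F1$, to the hamiltonian cycle $C$ of statement $(2)$, thereby establishing $(2)$, $(3)$, and the asserted equivalence of $(2)$ and $(3)$ at once.
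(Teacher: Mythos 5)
Your outer scaffolding is sound and in part coincides with the paper's: part $(1)$ by direct structural inspection, and the identification of the left-hand sides of $(2)$ and $(3)$ via Theorem~\ref{dualAtrail} is exactly the paper's move; your derivation of the right-hand-side equivalence from Theorem~\ref{PR:1} applied to $Lf(G)$, with the edge-disjointness/adjacency argument forcing $\mathcal{T}=F1$ exactly (every $F2$-face of the triangulation $G^{*}$ shares each of its edges with an $F1$-face), is a legitimate alternative to the paper's citations. But everything you actually establish amounts only to the meta-statement ``$(2)$ holds if and only if $(3)$ holds.'' The theorem asserts that $(2)$ is \emph{true}, and this is precisely the step you defer: your final paragraph announces a ``local dictionary'' between non-separating $A$-trails of $G^{*}$ and quasi spanning trees of faces containing all of $F1$, calls it the main obstacle, and never carries it out. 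Observation~\ref{obs:A-trail} does not close this for you: it hands you, from an $A$-trail, the quasi spanning tree whose $\mathcal{T}$ is a full colour class, but it does not say where non-separation enters (it must be what guarantees that every face of the other colour class also carries a transition), nor does it verify the tree condition on $\mathcal{R}(V_3^{'},F1)$ for the colour class you need. As written, the proposal is a reduction of the theorem to an unproven lemma, and you say so yourself.

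For comparison, the paper fills exactly this hole by proving $(2)$ directly and constructively: given a hamiltonian cycle $C_0=v_1v_2\ldots v_n$ of $G$, it takes the $n$ edges $e_i^{'}$ of $Lf(G)$ shared by consecutive hexagons $C_6(v_i)$ and $C_6(v_{i+1})$, and joins them through each hexagon by the path on the side of $C_0$ dictated by that hexagon's colour ($F_1$ versus $F_2$); the $3-$face-colouring of $Lf(G)$, i.e.\ the bipartition of $G$, makes these choices consistent, and one checks that the resulting cycle $C$ is hamiltonian with $int(C)=F_1\cup F_3^{'}$ and $ext(C)=F_2\cup F_3^{''}$. With $(2)$ in hand the paper obtains the right side of $(3)$ by verifying directly that $\mathcal{R}(V_3^{'},F1)$ is a tree (in the spirit of the weak-dual/outerplanarity argument in the proof of Theorem~\ref{PR:3}), and invokes Theorem~\ref{PR:3} for the converse, bypassing your dual-side local analysis entirely. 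To complete your version you must either reproduce this construction or genuinely prove your dictionary; note also the smaller unproved assertion you lean on, namely that in the Theorem~\ref{PR:1} correspondence a face of the reduced graph lies in $\mathcal{T}$ exactly when the corresponding hexagon of $Lf(G)$ lies inside $C$ --- this is implicit in the traversal described before Theorem~\ref{PR:1} but is not part of that theorem's statement, and your forcing argument ($\mathcal{T}\supseteq F1$ puts all $F_1$-hexagons inside) silently depends on it.
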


\begin{proof}
By Definition~\ref{DEF:Leapfrog} and definition of the dual graph of a plane graph,  statement $(1)$ is true.

\medskip
Next we show that $(2)$ is true. Assume $G$ has a hamiltonian cycle $C_0=e_1e_2\ldots e_n$ such that $e_i=v_iv_{i+1}$ for $i=1,\ldots,n-1, e_n = v_nv_1$.
Let $e=v_iv_j\in E(G)$ be the edge  corresponding to $e^{'}\in E(C_6(v_i))\cap E(C_6(v_j))\subset E(Lf(G))$, for $1\le i\neq j\le n$ (see Definition~\ref{DEF:Leapfrog}
concerning $C_6(v_i)$).

Now we construct a hamiltonian cycle $C$ in $Lf(G)$ corresponding to $C_0$ as follows.
Consider $C^{\circ}=\{e_i^{'}\ |\ i=1,\ldots,n\}$. If $C_6(v_{i+1})\in F_1$ $(C_6(v_{i+1})\in F_2)$ where $\{e_i^{'},e_{i+1}^{'}\}\subset E(C_6(v_{i+1}))$, add the path in $C_6(v_{i+1})$ connecting the endvertices of $e_i^{'}$ and $e_{i+1}^{'}$  outside (inside, respectively) $C_0$ to $C^{\circ}$, for $i=1,\ldots,n-1$. Then add the path in $C_6(v_{1})$ connecting the endvertices of $e_1^{'}$ and $e_n^{'}$  inside (outside, respectively) $C_0$ to $C^{\circ}$; call the final set thus constructed $C$.
By the construction of $C$, $int(C) = F_1\cup F_3^{'}$ and $ext(C) = F_2\cup F_3^{''}$ where $F_3 = F_3^{'}\dot{\cup}  F_3^{''}$. Since every vertex $v\in V(Lf(G))$  is incident to an adge $e$ which belongs to an $i-$face boundary, $i\in \{1,2\}$, by $F_1\subset int(C)$
and $F_2\subset ext(C)$, we have $C$ traverses the edge $e$ and then $v\in V(C)$. Therefore, $C$ is hamiltonian.
\\
Conversely, it is straightforward to see that a hamiltonian cycle in $Lf(G)$ as described yields a hamiltonian cycle in $G$.
Thus, $(2)$ is true.

\medskip
Theorem~\ref{dualAtrail} emplies that  $(2)$ is equivalent to the left side of $(3)$.

And finally we show that   $(2)$ is equivalent to the right side of $(3)$. Again consider a  hamiltonian cycle in $G$. By $(2)$, there is a hamiltonian cycle $C$ in $Lf(G)$ such that $int(C) = F_1\cup F_3^{'}$ and $ext(C) = F_2\cup F_3^{''}$ where $F_3 = F_3^{'}\dot{\cup} F_3^{''}$.
 Now, let $V_3^{'}$  be the set of vertices in $Lf(G)/\mathcal{Q}_{\mathcal{F}}$ corresponding to $F_3^{'}$. Then, it can be seen easily that  $\mathcal{R}(V_3^{'},F1)$ is a tree;
 and therefore,   $Lf(G)/\mathcal{Q}_{\mathcal{F}}$ has a quasi spanning tree of faces containing all of $F1$ and where $V_3^{'}$ is its set of proper vertices.
 Thus, $(3)$ is true. The converse is true by Theorem~\ref{PR:3}.
\end{proof}

Theorem~\ref{TH:Herbert} puts hamiltonicity in $G$ in a qualitative perspective of the algorithmic complexity regarding quasi spanning trees of faces of a special type in
the reduced graph  of the leapfrog extension of $G$.
In fact, if $\mathcal{G}$ is a class of Barnette graphs where hamiltonicity can be decided in polynomial time, then the same can be said regarding
special types of
quasi spanning trees of faces in the reduced graphs of the leapfrog extensions  of the elements of  $\mathcal{G}$ (as stated in the theorem). For, given a hamiltonian cycle $C_0$ in $G\in \mathcal{G}$, a
non-separating  $A-$trail $L_{C_0}$ in $G^{*}$ can be found in polynomial time which in turn yields a quasi spanning tree of faces in $Lf(G)/\mathcal{Q}_{\mathcal{F}}$ as described in $(3)$, 
also in polynomial time. Compare this with Theorem~\ref{TH:Lf.NP-C} and Theorem~\ref{cor:4con-eulerian-triangulation}.

The following proposition shows that if Barnette's Conjecture is false then there is a particular edge $e$ in some hamiltonian Barnette graph such that every  hamiltonian cycle of that graph contains $e$.

\begin{proposition}\label{Pr:G1}
If there exists a non-hamiltonian Barnette graph, then there exists a hamiltonian Barnette graph $G_1$
with a particular edge $e=uv$  such that $e\in E(C)$ for every hamiltonian cycle $C$ of $G_1$. Furthermore, if $e_1$ and $e_2$ are the two edges other than $e$ incident to $u$ in
$G_1$, then $G_1$ has a hamiltonian cycle $C_i$ traversing $e$ and $e_i$, for $i=1,2$.
\end{proposition}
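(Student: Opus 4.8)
The plan is to build $G_1$ from a hypothetical non-hamiltonian Barnette graph by a local gadget construction that forces a designated edge into every hamiltonian cycle. First I would take a non-hamiltonian Barnette graph $G_0$ and fix a vertex $w$ of $G_0$ together with one of its three incident edges, say $wx$. The idea is to modify $G_0$ in a neighborhood of $w$ so that the modified graph $G_1$ becomes hamiltonian, while any hamiltonian cycle is compelled to use a particular new edge $e=uv$ created in the modification. The intuition is that if the original graph had \emph{no} hamiltonian cycle, then inserting a suitably chosen subgraph whose only hamiltonian traversals route through $e$ yields a graph that is hamiltonian precisely because of the inserted structure, and every hamiltonian cycle must exploit that structure and hence pass through $e$.

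Concretely, I would look for a small planar cubic bipartite gadget $D$ with two or three distinguished boundary edges that, when spliced into $G_0$ in place of an edge or a vertex, has the property that every Hamilton path or cycle through the gadget is forced across a central edge $e$. A natural candidate is to attach a copy of a known forcing fragment (for instance a piece reminiscent of the classical Tutte-triangle construction, adapted to remain bipartite and $3$-connected) at the site where $G_0$ fails to be hamiltonian. I must verify three things for $G_1$: (a) $G_1$ is still a Barnette graph, i.e. it is planar, cubic, bipartite, and $3$-connected — the bipartiteness and cubicity are preserved by choosing the gadget to be cubic and $2$-colorable with a color pattern matching the attachment site, and $3$-connectedness is preserved by attaching across a nontrivial edge cut; (b) $G_1$ \emph{is} hamiltonian — this is where the gadget supplies the missing cycle; and (c) the edge $e$ lies on every hamiltonian cycle of $G_1$.

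For the furthermore clause, once $e=uv$ is forced, I would argue about the two remaining edges $e_1,e_2$ at $u$. Since $u$ is cubic and every hamiltonian cycle uses $e$, each hamiltonian cycle uses exactly one of $e_1,e_2$ as the second edge at $u$. I would then exhibit, for each $i\in\{1,2\}$, an explicit hamiltonian cycle $C_i$ using $e$ together with $e_i$. The cleanest route is to design the gadget so that it admits (at least) two genuinely different internal routings, one exiting $u$ along $e_1$ and one along $e_2$, each extendable to a full hamiltonian cycle of $G_1$; the symmetry or flexibility of the gadget should guarantee both are realizable. Alternatively, if the gadget has a reflective symmetry swapping $e_1$ and $e_2$ while fixing $e$, producing $C_2$ from $C_1$ is automatic.

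The main obstacle I anticipate is the simultaneous satisfaction of all the Barnette constraints together with the forcing property: keeping the construction bipartite while forcing an edge is delicate, because bipartite cubic planar graphs resist the kinds of parity-based edge-forcing arguments that work easily in the non-bipartite setting (this is precisely why Barnette's Conjecture is hard). The crux is therefore to find — or to invoke from the hypothesis of non-hamiltonicity — a gadget that both respects the $2$-coloring and genuinely forces $e$. I expect the construction to lean essentially on the assumed existence of the non-hamiltonian $G_0$: the non-hamiltonicity of $G_0$ is what makes a designated edge unavoidable after the modification, since without $G_0$'s obstruction there would be enough freedom to avoid $e$. Verifying the forcing rigorously, rather than the mere existence of \emph{some} hamiltonian cycle, will be the step demanding the most care.
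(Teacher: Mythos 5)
There is a genuine gap, and it is structural rather than a matter of missing detail: the splice-a-gadget strategy cannot work in principle. If you replace a single vertex $w$ of the non-hamiltonian graph $G_0$ by a cubic gadget $D$ attached along the three former edges at $w$, then any hamiltonian cycle $C$ of the new graph $G_1$ crosses the resulting $3$-edge-cut an even, nonzero number of times, hence exactly twice; the trace of $C$ outside $D$ is then a hamiltonian path of $G_0\setminus\{w\}$ between two neighbours of $w$, which extends through $w$ to a hamiltonian cycle of $G_0$ --- contradicting the non-hamiltonicity of $G_0$. The same pull-back argument applies verbatim to splicing a gadget into an edge $wx$ (the outside trace is a hamiltonian $w$--$x$ path, which together with $wx$ closes to a hamiltonian cycle of $G_0$). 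So no gadget, however cleverly designed, can make your $G_1$ hamiltonian at all: requirement (b) of your plan is unachievable, and the whole construction collapses before the forcing property is even at issue. The paper's proof inverts your logic: instead of adding structure to a counterexample, it takes a \emph{smallest} counterexample $G_0$, deletes a facial quadrilateral $Q=wxyzw$, and reconnects the four dangling neighbours in the two planar ways, obtaining graphs $G_0'$, $G_0''$ (or, when these are only $2$-connected, four pieces $R_i'$ obtained by splitting along two $4$-edge-cuts and identifying three pendant ends to a new vertex). These reduced graphs are Barnette graphs \emph{smaller} than $G_0$, hence hamiltonian by minimality --- that is where hamiltonicity of $G_1$ comes from --- while the non-hamiltonicity of $G_0$ supplies the forcing: a hamiltonian cycle of the reduced graph using the wrong edges would lift to a hamiltonian cycle of $G_0$. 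Minimality of the counterexample, which your proposal never invokes, is the indispensable ingredient.

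Your treatment of the furthermore clause has a second, independent gap. It is not enough to note that every hamiltonian cycle uses exactly one of $e_1,e_2$ at $u$ and then hope a symmetric gadget realizes both options: the forced edge $e_0=u_0v_0$ produced by the first part may well have a \emph{second} forced edge at $u_0$, so the pair $(G_1,e_0)$ need not satisfy the claim, and with the gadget route dead there is no symmetry to appeal to. The paper resolves this by relocating the distinguished edge: fix a hamiltonian cycle $C_0=f_0f_1\ldots f_{n-1}$ with $f_0=e_0$, let $k$ be maximal such that the initial segment $f_0,\ldots,f_{k-1}$ lies on \emph{every} hamiltonian cycle while $f_k$ does not, and set $e=f_{k-1}$, $e_1=f_k$. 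Such a $k$ exists because otherwise $G_1$ would be uniquely hamiltonian, which is impossible for a cubic graph (Smith's theorem); then $C_0$ traverses $e$ and $e_1$, and any hamiltonian cycle avoiding $f_k$ traverses $e$ and $e_2$. This maximal-forced-segment argument, together with the non-unique-hamiltonicity of cubic graphs, is the idea your proposal is missing for the second assertion.
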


\begin{proof}{
Suppose $G_0$ is a smallest counterexample to Barnette's Conjecture.

First we construct a hamiltonian Barnette graph $G_1$
with a particular edge $e_0=u_0v_0$  such that $e_0\in E(C)$ for every hamiltonian cycle $C$ of $G_1$.

 Let $Q=wxyzw$ be a facial quadrilateral
 in $G_0$ and let $a_1$ be the third neighbour of $a$ in $G_0$, for $a\in \{w,x,y,z\}$.

Set $G_0^{'}=(G_0\setminus \{w,x,y,z\})\cup \{w_1x_1,y_1z_1\}$ and
$G_0^{''}=(G_0\setminus \{w,x,y,z\})\cup \{w_1z_1,x_1y_1\}$.
Both $G_0^{'}$ and $G_0^{''}$ are planar, cubic and bipartite.

Suppose that $G_0^{'}$ is   $3-$connected. By minimality of $G_0$, the graph $G_0^{'}$ has a hamiltonian
cycle. Furthermore, no hamiltonian cycle of $G_0^{'}$ goes through either the edge $w_1x_1$ or the edge $y_1z_1$; otherwise,
 we can extend this cycle to a hamiltonian cycle  in $G_0$, a contradiction.

  We have thus guaranteed that no
hamiltonian cycle in $G_1=G_0^{'}$ traverses a particular edge $w_1x_1$, and thus every hamiltonian cycle traverses an edge $e_0$
adjacent to $w_1x_1$, as desired. The same conclusions can be drawn if $G_0^{''}$ is $3-$connected.

Suppose now that $G_0^{'}$ and $G_0^{''}$ are both  $2-$connected only. Then there are two edge cuts of size four, $T_1$ and $T_2$, in $G_0$ such that $\{wx,yz\}\subset T_1$ and  $\{wz,xy\}\subset T_2$.

 Removing the vertices $w,x,y,z$ and the remainder of the two edge cuts $T_1$ and $T_2$
separates $G_0$ into four components $R_1, R_2, R_3, R_4$, with the removed edges of $G_0$ including
an edge from $R_i$ to $R_{i+1}$, for $i=1,2,3$, and an edge from
$R_4$ to $R_1$, plus the four edges from the four $R_i$'s  incident to a vertex of $Q$.
That is, each $R_i$  is incident to  three   edges whose
endvertices not in $R_i$ can be identified to a single vertex $r_i$ to obtain a bipartite $R_i^{'}$, since their
three endvertices in $R_i$ are at even distance from each other. For, in the $2-$vertex-coloring of $G_0$, the three
 vertices of degree $2$ of $R_i,\ 1\le i\le 4$, must have the same color;
otherwise, two copies of such $R_i$ could be used to construct a cubic bipartite graph having a bridge. Clearly, $R_i^{'}$
is $3-$connected, cubic, planar, and bipartite, for each $i=1,\ldots,4$.

By minimality of $G_0$ each such   $R_i^{'}$
 has a hamiltonian cycle, yet it is not the case that each of the
three choices of two edges incident to $r_i$ yields a hamiltonian cycle, since otherwise we
could obtain a hamiltonian cycle for $G_0$. Thus one of the three edges
incident to $r_i$ in $R_i^{'}$ must belong to every hamiltonian cycle, thus yielding a Barnette graph $G_1= R_1^{'}$ with an edge $e_0=u_0v_0$ that belongs to every hamiltonian cycle of $G_1$.

If $G_1$ has a hamiltonian cycle $C_i^{'}$ traversing $e_0$ and $e_i^{'}=u_0v_i^{'}$, for $i=1,2$,
then let  $e=e_0$, $C_i=C_i^{'}$, and $e_i=e_i^{'}$, for $i=1,2$. This would complete the proof of  Proposition~\ref{Pr:G1}.  Thus, suppose instead that every hamiltonian cycle in $G_1$ is forced to
traverse $e_1^{'}=u_0v_1^{'}\in E(G_1)$ as well.

Let $C_0 = f_0, f_1,\ldots,f_{n-1}$ where $f_0=e_0$ and $f_1=e_1^{'}$ be a fixed hamiltonian cycle of $G_1$ and let $k$ to be the largest index such that the section $f_0,\ldots,f_{k-1}$ belongs to every hamiltonian cycle of $G_1$, whereas $f_k$ does not belong to every hamiltonian cycle of $G_1$. Such $k$ must exist; otherwise $G_1$ would be a uniquely hamiltonian graph which is impossible since $G_1$ is cubic. Denote $f_{k-1} = e = uv$ such that $f_k$ is incident to $u$. Set $e_1 = f_k$, and let $e_2$ be the third edge incident to $u$. Now, there must be a hamiltonian cycle $C_1$ other than $C_0$ not containing $e_1$ since $e_1=f_k$ does not belong to all hamiltonian cycles of $G_1$. Thus $C_1$ traverses $e$ and $e_2$. This finishes the proof of Proposition~\ref{Pr:G1}.
 }\end{proof}

\section{NP-complete problems}\label{Sec:Complexity}

We now establish several NP-completeness results.\\
\noindent
In the proof of Theorem~\ref{CR:A-Tr.NP-C}, one may assume without loss of generality that the outer face and all quadrilaterals have color $2$.   Then by Observation~\ref{obs:A-trail}, we have the following corollary.
\begin{corollary}
Let $G$ be a Barnette graph with  a $3-$face-coloring   with color set $\{1,2,
3\}$. Assume the outer face
of $G$ is colored $2$  and $H$ is the reduced graph obtained by contracting the $1-$faces
and equipped with a $2-$face-coloring. Suppose that  $H$ has only triangles and quadrilaterals as face boundaries, and for which all quadrilaterals  have  color $2$ in the $2-$face-coloring. Then the decision problem of whether $H$ has a  quasi spanning tree defined by the set of all $($triangular$)$ $3-$faces
is NP-complete.
\end{corollary}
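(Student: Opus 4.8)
The plan is to reduce from the A-trail decision problem of Theorem~\ref{CR:A-Tr.NP-C} and to transfer NP-hardness through the correspondence between A-trails and quasi spanning trees of faces recorded in Observation~\ref{obs:A-trail}. First I would note that every instance $H$ of Theorem~\ref{CR:A-Tr.NP-C}, after the normalization mentioned in the text (re-embed so that the outer face is a $2$-face; then the outer face and all quadrilaterals are colored $2$), is a plane eulerian graph satisfying $\delta(H)\ge 4$: being $3$-connected it has $\delta(H)\ge 3$, and being eulerian all its degrees are even, so $\delta(H)\ge 4$. This is exactly the hypothesis needed to invoke Observation~\ref{obs:A-trail}.

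Second, I would check that each such $H$ genuinely arises as the reduced graph obtained from a Barnette graph $G$ by contracting the $1$-faces, so that the corollary's hypotheses are met. The natural witness is the truncation (vertex explosion) $G$ of $H$: replace every vertex $x$ of degree $d$ by a $d$-cycle $C_x$, one cycle-vertex per edge formerly incident to $x$, and join $C_x$ to $C_y$ by the edge inherited from $xy\in E(H)$. Then $G$ is cubic and planar by construction, and it is bipartite because every face of $G$ is even: the new cycle-faces $C_x$ have length $\deg_H(x)$, which is even since $H$ is eulerian, while a face of length $\ell$ of $H$ becomes a face of length $2\ell$ of $G$. Coloring each $C_x$ with color $1$ and keeping the colors $2,3$ on the faces inherited from $H$ yields a proper $3$-face-coloring in which the $1$-faces are precisely the cycles $C_x$; contracting them returns $H$ with its $2$-face-coloring (colors $2,3$) and with the outer face colored $2$.

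Third, I would apply Observation~\ref{obs:A-trail} in the form proved there: with the $2$-face-coloring of $H$ using colors $2,3$ and the outer face colored $2$, the set of all faces of the \emph{non-outer} color --- here all $3$-faces, which are exactly the triangles because every quadrilateral is colored $2$ --- forms a quasi spanning tree of faces of $H$ precisely when $H$ admits an A-trail, the proper/quasi vertex partition being the A-partition induced by that trail. The quasi-degree requirement of Definition~\ref{DEF:QST} is automatically satisfied at every vertex, since around any vertex the two face-colors alternate and hence exactly half of its incident faces are $3$-faces. Thus deciding whether $H$ has a quasi spanning tree of faces defined by the set of all triangular $3$-faces amounts to choosing a set $U$ of proper vertices with $\mathcal{R}(U,\mathcal{T})$ a tree, and this is equivalent to the existence of an A-trail in $H$. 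As the candidate can be verified in polynomial time (test that the restricted radial graph is a tree), the problem lies in NP; combined with the realization above and Theorem~\ref{CR:A-Tr.NP-C} it is NP-hard, hence NP-complete.

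The only step I expect to be a genuine obstacle is the second one: confirming that the truncation $G$ is a bona fide Barnette graph. Bipartiteness is delivered by the eulerian hypothesis on $H$ and cannot be dropped, but simplicity and $3$-connectivity require care --- I would use $\delta(H)\ge4$ and simplicity of $H$ to rule out digons, and invoke the standard fact that the truncation of a $3$-connected planar graph is again $3$-connected (equivalently, truncating a $3$-polytope yields a $3$-polytope).
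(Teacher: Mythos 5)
Your proposal is correct and follows essentially the same route as the paper: the paper obtains this corollary directly from Theorem~C together with Observation~1, under exactly the normalization you describe (outer face and all quadrilaterals colored $2$), with the A-trail/quasi-spanning-tree correspondence carrying the NP-hardness, and your truncation construction merely makes explicit the (implicit) fact that every instance $H$ of Theorem~C arises as the reduced graph of a Barnette graph. The only quibble is your parenthetical claim that the $3-$faces are \emph{exactly} the triangles --- a triangle of $H$ could also be colored $2$, so one only knows every $3-$face is a triangle --- but this does not affect the argument, since the tree is defined by the set of all $3-$faces either way.
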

In Theorem~\ref{TH:4}, we give a similar result concerning the reduced graph $H$ containing only digons and quadrilaterals. For such a graph, we are {trying} to find a spanning tree of quadrilaterals.

The decision problem of whether a $3-$connected planar cubic graph $G_0$ has a hamiltonian
cycle is NP-complete, as shown by Garey et al.~\cite{Garey}.
Let $e =uv\in E(G_0)$. Then the decision problem of whether $G_0$ has a hamiltonian
cycle traversing this specified edge $e$, is also
NP-complete. Let $G_0^{'}=G_0\setminus \{e\}$. Thus, the decision problem
of whether $G_0^{'}$ has a hamiltonian path from $u$ to $v$ is also NP-complete.

\begin{theorem}\label{TH:4}
Let $G$ be a Barnette graph.
Let $c_f$ be a $3-$face-coloring  of $G$ with color set $\{1,2,
3\}$, and let $H$ be the reduced graph obtained by contracting the $1-$faces
and equipped with a $2-$face-coloring induced by $c_f$. Suppose that the $2-$faces
 in $H$ are quadrilaterals
and the $3-$faces in $H$
are digons. Then the decision problem of whether $H$ has a spanning tree of $2-$faces
is NP-complete.
\end{theorem}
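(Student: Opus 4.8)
The plan is to prove both membership in NP and NP-hardness. Membership is routine: given a candidate subset $\mathcal{T}$ of the quadrilateral $2$-faces of $H$, one verifies in polynomial time that the members of $\mathcal{T}$ are pairwise edge-disjoint, that every vertex of $H$ lies on some face of $\mathcal{T}$, and that the restricted radial graph $\mathcal{R}(V(H),\mathcal{T})$ is a tree (check connectivity together with the edge count $|V(H)|+|\mathcal{T}|-1$). Hence the decision problem lies in NP, and the entire weight of the proof is the hardness direction.

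For hardness I would reduce from the problem, recorded as NP-complete in the paragraph preceding the theorem, of deciding whether $G_0' = G_0\setminus\{e\}$ has a hamiltonian path from $u$ to $v$, where $G_0$ is a $3$-connected planar cubic graph and $e=uv\in E(G_0)$. The goal is to construct, in polynomial time, a Barnette graph $G$ with a $3$-face-coloring whose reduced graph $H$ (contract the $1$-faces) has only quadrilateral $2$-faces and digon $3$-faces, such that $H$ admits a spanning tree of $2$-faces if and only if $G_0'$ has a hamiltonian $(u,v)$-path. To translate ``spanning tree of $2$-faces'' back into hamiltonicity I would invoke Theorem~\ref{PR:3} with the colors $1$ and $2$ interchanged: a spanning tree of $2$-faces in $H$ exists precisely when $G$ has a hamiltonian cycle $C$ with all $1$-faces inside $C$, all $3$-faces outside $C$, and the $2$-faces on either side. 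Note that the required face-size profile of $H$ forces, under the inverse of $1$-face contraction, that the $3$-faces of $G$ be quadrilaterals and its $2$-faces be octagons (since a $2m$-gon $3$-face yields an $m$-gon in $H$ and a $2k$-gon $2$-face yields a $k$-gon), and this is a constraint the construction must respect. Because the correspondence produced by Theorem~\ref{PR:3} concerns a hamiltonian cycle with prescribed face-sides rather than an arbitrary one, the reduction remains meaningful even if Barnette's Conjecture were true and $G$ were automatically hamiltonian.

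The construction I have in mind replaces each vertex and each edge of $G_0'$ by a bipartite planar gadget: the digon $3$-faces serve as rigid connectors forced entirely outside $C$ by the ``$3$-faces outside'' constraint, the $1$-faces (which contract to the vertices of $H$) carry the routing information, and the selection of quadrilateral $2$-faces into the spanning tree records which two of the three incident edges of $G_0'$ are traversed at each internal vertex, with special terminal gadgets at $u$ and $v$ encoding their degree-$2$, path-endpoint role. I would then verify that $G$ is genuinely a Barnette graph, namely simple, planar, cubic, bipartite (all face lengths even by the octagon/quadrilateral design), and $3$-connected, and that $c_f$ is a proper $3$-face-coloring with the stated profile. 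It is also useful to record the reformulation flagged just before the theorem: a spanning tree of quadrilateral $2$-faces is exactly a Spanning Tree Triarity Problem on the auxiliary graph $H'$ obtained by turning each quadrilateral $xyztx$ into a three-edge path $xy,yz,zt$ with the all-three-or-none constraint. This makes transparent why the matroid-parity argument behind Theorem~\ref{TH:3} (which handles the pairwise ``$0$ or $2$'' constraint) does not extend to the triple constraint, and it isolates precisely where the hardness is introduced.

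The main obstacle will be the gadget design together with the two-directional correctness proof. The forward direction (a hamiltonian $(u,v)$-path yields a spanning tree of $2$-faces) should be a direct construction that simply reads the path off gadget by gadget. The converse is the delicate part: I must ensure that the cactus/tree condition on the chosen quadrilaterals, combined with the forced outside-placement of the digon $3$-faces propagating through the connectors, admits \emph{no} ``short-circuiting'' selection — every spanning tree of $2$-faces must arise from a genuine hamiltonian $(u,v)$-path, with no spurious local choices that globally fail to be path-like. Guaranteeing this rigidity while simultaneously preserving $3$-connectivity and bipartiteness of $G$ and the exact digon/quadrilateral face profile of $H$ is the combinatorial core where I expect the real work to lie.
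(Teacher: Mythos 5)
Your proposal correctly identifies the right source problem (the NP-completeness of deciding whether $G_0'=G_0\setminus\{e\}$ has a hamiltonian path from $u$ to $v$, exactly as the paper uses) and the right translation of ``spanning tree of $2-$faces'' into a constrained hamiltonian cycle via Theorem~\ref{PR:3} with colors permuted. But the proof has a genuine gap at its core: the reduction itself is never constructed. You write that you will replace each vertex and edge of $G_0'$ by ``a bipartite planar gadget'' and then acknowledge that the gadget design, the verification that $G$ is a Barnette graph with the stated face profile, and both directions of correctness — in particular ruling out ``short-circuiting'' selections in the converse — are ``where I expect the real work to lie.'' That work is the proof; what you have written is a plan for one. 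Moreover, your stated gadget semantics (``the selection of quadrilateral $2-$faces \ldots records which two of the three incident edges of $G_0'$ are traversed at each internal vertex'') suggests the key combinatorial identification is missing: in the correct correspondence each quadrilateral of $H$ encodes a \emph{single} edge of $G_0'$, and the spanning tree of $2-$faces selects precisely the \emph{complement} of the hamiltonian path, i.e.\ a perfect matching of $G_0'$ — each vertex lies on exactly one chosen quadrilateral because exactly one of its incident edges is off the path. An encoding by pairs of traversed edges does not match the ``every vertex covered, faces pairwise edge-disjoint, radial graph a tree'' structure of Definition~\ref{DEF:QST}.

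For comparison, the paper needs no gadgets at all: it takes $H$ to be the radial graph $\mathcal{R}(G_0')$ with every edge doubled into a digon ($3-$colored), so that the remaining faces are quadrilaterals $xfx'f'x$ in bijection with the edges $xx'$ of $G_0'$ ($2-$colored), and then recovers $G$ from $H$ by expanding each vertex $w$ into a cycle $C_w$ (the inverse of $1-$face contraction), checking $3-$connectedness from $\kappa(H)\geq 2$ and $\kappa'(H)>2$. Claim~\ref{clm:3} then does exactly the rigidity analysis you flagged as the obstacle: in the forward direction, the quadrilaterals of the complementary matching $L'$ are shown to be connected, spanning, and acyclic (a cycle of chosen quadrilaterals would yield an edge cut of $G_0'$ consisting of edges of $L'$, which the hamiltonian path would have to cross); in the converse, the candidate edge set $L$ is a $(u,v)$-path plus possibly cycles, and a cycle is excluded by following the unique tree path of $2-$faces from a face inside the cycle to a face outside it, which forces some chosen quadrilateral to correspond to an edge of $L$, a contradiction. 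If you want to complete your proof, the shortest route is to abandon per-vertex gadgets and establish this radial-graph bijection together with the two planarity arguments; without them (or worked-out substitutes), the proposal does not yet prove the theorem.
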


\begin{proof}{ We want to construct $G$ and $H$ as stated  in the theorem. To this end,
consider $G_0$ and $G_0^{'}$ as described in the paragraph preceding the statement of Theorem~\ref{TH:4}
 and
assume $G_0^{'}$ is the plane graph resulting by edge deletion from a fixed imbedding of $G_0$.
Let $H$ be the plane graph resulting by replacing every  edge of the radial graph $\mathcal{R}(G_0^{'})$ with a digon; $H$ is eulerian and hence $2-$face-colorable.
First color
the digons corresponding to edges in $\mathcal{R}(G_0^{'})$ with color $3$. The remaining faces of $H$  are quadrilaterals
$Q=xfx^{'}f^{'}x$  corresponding to  $xx^{'}\in E(G_0^{'})\cap  bd(F) \cap bd(F^{'})$ with $F$ and $F^{'}$ in $G_0^{'}$ corresponding to $f,f^{'}\in V(\mathcal{R}(G_0^{'}))$. Color these quadrilaterals with color $2$.
Let $G$ be the plane cubic graph obtained from $H$ by replacing each $w\in V(H)$ with a cycle $C_w= w_1\ldots w_{\deg_H(w)}w_1$
and replacing $e_i= u_iw\in E(H)$ with $e_i^{'}=u_iw_i,$ for $i = 1,\ldots,deg_H(w)$.
We have $\kappa(H) \geq 2$, but $\kappa^{'}(H)>2$. Therefore, $G$ is $3-$connected and thus a Barnette graph whose $3-$face-coloring has color set $\{1,2,3\}$; the $1-$faces of $G$ correspond to the vertices of $H$.

\begin{claim}\label{clm:3}
A set $L$ of edges in $G_0^{'}$ forms a hamiltonian path from $u$ to $v$ in $G_0^{'}$ if and
only if $H_{\mathcal{T}}$  is a spanning tree of $2-$faces in $H$ where $\mathcal{T}$ is the set  of $2-$faces $($which are quadrilaterals$)$ in $H$ corresponding to the edges in $E(G_0^{'})\setminus L$.
\end{claim}

Suppose $L$ is a hamiltonian path from $u$ to $v$ in $G_0^{'}$. Let $L^{'}= E(G_0^{'})\setminus L$ (which is a perfect matching in both $G_0$ and $G_0^{'}$), and let $\mathcal{T}$ be the set of all  quadrilaterals in $H$ corresponding to $L^{'}$. Note that
since $L$  is a hamiltonian path, for any two edges $g,h\in L^{'}$, there is a sequence of edges $g = \ell_1,\ell_2, \ldots,\ell_k = h$ in $L^{'}$ such that each pair of
edges $\ell_i, \ell_{i+1}$ belongs to a face boundary in $G_0^{'}$, $1\le i\le k-1$. Therefore the $2-$faces in $\mathcal{T}$ induce a connected subgraph of $H$.
Notice also that every vertex in $H$ belongs to some face in $\mathcal{T}$ since every vertex $x\in V(G_0^{'})$ is
incident to an edge in $L^{'}$, and every face $F$ in $G_0^{'}$ contains at least one edge of $L^{'}$ in its boundary.

  Finally, there is no cycle of $2-$faces in $H_{\mathcal{T}}$.  Suppose to the contrary,  we had a cycle $Q_1Q_2\ldots Q_rQ_1$ of
$2-$faces in $H_{\mathcal{T}}$.  Since the number of $2-$faces in $H_{\mathcal{T}}$ containing $x$ is equal to $\deg_{G_0^{'}}(x)-\deg_L(x)=1$, for every vertex $x\in V(G_0^{'})$, so $Q_i$ and $Q_{i+1}$ share a vertex $f\in V(H)$  corresponding to  a face $F\in \mathcal{F}(G_0^{'})$. Thus
$\{q_1, q_2, \ldots, q_r\}\subset L^{'}$, with $q_i$ corresponding to the face $Q_i$ in the cycle of $2-$faces in $H_{\mathcal{T}}$, separates
the graph $G_0^{'}$ into at least two components; so the hamiltonian path $L$ would have to contain at least
one of these edges $q_i\in L^{'}$, a contradiction. Therefore, $H_{\mathcal{T}}$ is a spanning tree of $2-$faces for $H$.

Conversely, suppose $H_{\mathcal{T}}$ is a spanning tree of  $2-$faces for $H$. Let $L^{'}$ be the corresponding edges in $G_0^{'}$
(which appear as chords of the elements of $\mathcal{T}$ if one draws $G_0^{'}$ and $H$ in the plane as described before);
and let $L=E(G_0^{'})\setminus L^{'}$.
Each vertex $x\in V(G_0^{'})$  belongs to exactly one $2-$face $Q=xfx^{'}f^{'}x$ in $\mathcal{T}$, since every other
$2-$face in $\mathcal{T}$ containing $x$ also contains either $f$ or $f^{'}$, and therefore these two $2-$faces
share two vertices joined by parallel edges
 and thus cannot both be in the spanning tree of faces $H_{\mathcal{T}}$. Therefore every vertex in $G_0^{'}$ is incident to exactly one edge in $L^{'}$, and so the two vertices $u$ and $v$ of degree $2$ in $G_0^{'}$
are incident to exactly one edge in $L$, while the remaining vertices  of degree $3$ in $G_0^{'}$ are
incident to exactly two edges in $L$. That is, $L$ induces a path joining $u$ and $v$ in $G_0^{'}$ plus a possibly empty set
 of cycles in $G_0^{'}$, such that the path and the cycles are disjoint and cover all of $V(G_0^{'})$. We show that $L$ cannot contain a cycle in $G_0^{'}$, and so $L$ is just a hamiltonian path joining $u$
to~$v$.

Suppose
$L$ contains a cycle $C=h_1h_2\ldots h_kh_1$ in $G_0^{'}$. Let $F$ and $F^{'}$ be faces of $G_0^{'}$ inside and outside the
cycle $C$, respectively, and let $f$ and $f^{'}$ be the vertices in $H$ corresponding to  $F$ and $F^{'}$, respectively.  Since $f$ and $f^{'}$ are vertices in the spanning tree $H_{\mathcal{T}}$ of $2-$facces, there
is a unique sequence of $2-$faces $Q^*_1, Q^*_2,\ldots, Q^*_l$ in $\mathcal{T}$ such that $Q^*_1$ contains $f$, $Q^*_l$ contains $f^{'}$ and each
pair $Q^*_{i-1}$, $Q^*_i$ share a vertex $f_i$ corresponding to
a face in $G_0^{'}$, for $2\le i<l$. In particular, if we denote $f_1 = f$ and $f_l = f^{'}$, then for
some pair $f_i$, $f_{i+1}$,  for the
corresponding  face $F_i$ in $G_0^{'}$ we must have $F_i\subseteq G_0^{'} \cap int(C)$ and for the corresponding  face $F_{i+1}$ in $G_0^{'}$ we must have $F_{i+1}\subseteq G_0^{'}\cap ext(C)$. This implies that the  $2-$face $Q^*_i$ in $H_{\mathcal{T}}$ corresponds to one of the edges in $L$ and not in $L^{'}$, a contradiction. This completes the proof of Claim~\ref{clm:3}.

 Therefore by Claim~\ref{clm:3}, $H$ has a
spanning tree of $2-$faces if and only if $G_0^{'}$ has a hamiltonian path from $u$ to $v$, and so the
decision problem of whether $H$ has a spanning tree of $2-$faces is NP-complete.
}\end{proof}

We obtain two Corollaries from this result.

\begin{corollary}
Let $G$ be a Barnette graph
with a $3-$face-coloring with color set $\{1,2,
3\}$, and let $H^{'}$ be the reduced graph obtained by contracting the $1-$faces. Suppose all vertices of $H^{'}$ have degree $8$. Then the decision problem
of whether $H^{'}$ has an $A-$trail is NP-complete.
\end{corollary}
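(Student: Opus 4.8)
The plan is to reuse verbatim the Barnette graph $G$ built in the proof of Theorem~\ref{TH:4}, and to read off the desired $A$-trail problem from it after a mere relabeling of colors. The key observation I would make first is a size computation hidden in that construction: when $G$ is obtained from $H$ by expanding each vertex $w$ into the cycle $C_w$, every face of $H$ of size $k$ turns into a face of $G$ of size $2k$, because at each corner the transformed boundary now traverses exactly one edge of the inserted cycle. Hence the quadrilateral $2$-faces of $H$ become \emph{octagonal} faces of $G$, while the digonal $3$-faces become quadrilaterals and the new $1$-faces are the cycles $C_w$.

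Next I would relabel the color set so that these octagons play the role of the ``$1$-faces'' in the statement, and let $H'=G/\mathcal{Q}$ be the reduced graph obtained by contracting them. Since the octagons are pairwise vertex-disjoint and cover $V(G)$ (they form a facial $2$-factor, being one color class of a $3$-face-coloring), $H'$ is well defined and eulerian, and every one of its vertices has degree equal to the length of the corresponding octagon, namely $8$. This is precisely where the hypothesis ``all vertices of $H'$ have degree $8$'' comes from, so no separate gadget is needed to force it.

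It then remains to identify the $A$-trail problem on $H'$ with Theorem~\ref{TH:4}. Applying Theorem~\ref{PR:3} with the colors permuted so that the octagon color is the distinguished one, an $A$-trail of $H'$ exists if and only if the graph obtained from $G$ by contracting a different color class has a spanning tree of octagon-faces; but after undoing the relabeling this is exactly the statement that $H$ has a spanning tree of $2$-faces, which Theorem~\ref{TH:4} shows is equivalent to $G_0'$ having a Hamiltonian path from $u$ to $v$, and hence NP-complete. Membership in NP is immediate, since an $A$-trail is a polynomial-size certificate and the whole chain of reductions is polynomial.

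The part that needs care --- and the only real obstacle --- is the color/contraction bookkeeping through Theorem~\ref{PR:3}: one must check that the distinguished (octagon) color indeed occupies the ``either side'' role in Theorem~\ref{PR:3}$(i)$, arrange without loss of generality that the outer face of $G$ is a (quadrilateral) $3$-face as that result requires, and confirm via Observation~\ref{obs:A-trail} that on the eulerian graph $H'$ (which has $\delta(H')\ge 4$) $A$-trails correspond \emph{honestly} to the spanning trees of faces delivered by Theorem~\ref{PR:3}, rather than merely to quasi spanning trees. Once this bookkeeping is verified, the corollary follows at once.
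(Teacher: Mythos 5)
Your proposal is correct and takes essentially the same route as the paper: the paper's own proof likewise reuses the construction of Theorem~\ref{TH:4}, observes that the quadrilateral $2$-faces of $H$ correspond to a facial $2$-factor of octagons in $G$ whose contraction yields the $8$-regular $H^{'}$, and invokes Theorem~\ref{PR:3} to identify $A$-trails in $H^{'}$ with spanning trees of $2$-faces in $H$, with NP-completeness then following from Theorem~\ref{TH:4}. Your additional bookkeeping (the face-size doubling under vertex expansion, the color relabeling, and the outer-face choice) merely makes explicit what the paper leaves implicit.
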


\begin{proof}{ Consider the reduced graph $H$
in the statement of Theorem~\ref{TH:4} where  all $2-$faces in $H$ are quadrilaterals,
corresponding to a facial $2-$factor of octagons in $G$. If we contract in $G$ these octagonal $2-$faces, we obtain an $8-$regular reduced graph
$H^{'}$. By Theorem~\ref{PR:3}, $H^{'}$ has an $A-$trail  if and only
if $H$ has a spanning tree of $2-$faces, and this problem is NP-complete by Theorem~\ref{TH:4}.
}\end{proof}

\begin{corollary}
Let $G$ be a Barnette graph
with a $3-$face-coloring with color set $\{1,2,
3\}$, and let $H_0$ be the reduced graph obtained by contracting the $1-$faces. Suppose that the $2-$faces in $H_0$ are octagons and digons and
the $3-$faces in $H_0$ are triangles. Then the decision problem of whether $H_0$ has a spanning tree of faces is NP-complete.
\end{corollary}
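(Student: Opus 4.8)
The plan is to reduce from the same NP-complete problem that underlies Theorem~\ref{TH:4}, namely the existence of a hamiltonian path from $u$ to $v$ in the subcubic plane graph $G_0^{'}=G_0\setminus\{e\}$ with $e=uv$, and to produce the prescribed face pattern of $H_0$ by enlarging the gadget used in the proof of Theorem~\ref{TH:4}. Recall that there $H$ was built from the radial graph $\mathcal{R}(G_0^{'})$ by replacing every radial edge with a digon, which produced digonal $3$-faces and quadrilateral $2$-faces, the latter in bijection with $E(G_0^{'})$. First I would replace that digon-gadget by a larger one that turns each digonal $3$-face into a triangular $3$-face while simultaneously inflating each quadrilateral $2$-face into an octagonal $2$-face, the auxiliary vertices created in the process giving the digonal $2$-faces permitted by the statement. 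The resulting plane graph $H_0$ is again eulerian and carries the induced $2$-face-coloring with the triangles colored $3$ and the octagons and digons colored $2$. Taking $G$ to be the cubic graph obtained from $H_0$ by the vertex-expansion of Theorem~\ref{TH:4} (each $w$ replaced by a cycle $C_w$ of length $\deg_{H_0}(w)$) makes $G$ a Barnette graph with $H_0$ recovered by contracting the $1$-faces, since that expansion doubles every face size and so sends the $16$-gons, quadrilaterals and hexagons of $G$ to the octagons, digons and triangles of $H_0$.

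Next I would establish the correspondence between spanning trees of $2$-faces of $H_0$ and hamiltonian paths of $G_0^{'}$ by repeating, mutatis mutandis, the argument of Claim~\ref{clm:3}. The essential choice remains the selection of the octagonal $2$-faces, which I would put in bijection with the complement $E(G_0^{'})\setminus L$ of a hamiltonian path $L$ (the auxiliary digonal $2$-faces playing a purely structural role, so that every vertex of $H_0$ still lies on a selected $2$-face); the two defining properties of a spanning tree of faces then translate exactly as before, connectedness of the selected faces corresponding to $L$ being a single path from $u$ to $v$ and the absence of a cycle of $2$-faces corresponding to $L$ containing no separating cycle of $G_0^{'}$. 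By Theorem~\ref{PR:1} (equivalently by the equivalence of $(i)$ and $(iii)$ in Theorem~\ref{PR:3} after relabeling the colors $1$ and $2$) such a spanning tree of $2$-faces yields, and is yielded by, a hamiltonian cycle of $G$ of the required type, so that deciding whether $H_0$ has a spanning tree of faces is polynomially equivalent to deciding whether $G_0^{'}$ has a hamiltonian path from $u$ to $v$, hence NP-complete. It is precisely the presence of the octagonal (thus at least $4$-sided) $2$-faces that makes the problem hard, in contrast with the purely digon/triangle situation of Theorem~\ref{TH:3}, which is polynomially solvable.

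The hard part will be the explicit design of the enlarged gadget. It must realize exactly the face sizes demanded by the statement (octagons and digons colored $2$, triangles colored $3$) while keeping all degrees even, so that $H_0$ stays eulerian and $2$-face-colorable, and — most delicately — while preserving the combinatorial structure of the restricted radial graph $\mathcal{R}(U,\mathcal{T})$, so that the tree-versus-cycle dichotomy of Claim~\ref{clm:3} survives the enlargement unchanged and the auxiliary digons neither destroy coverage of $V(H_0)$ nor create spurious cycles of $2$-faces. A secondary obstacle, routine once the gadget is fixed, is to check that the vertex-expansion $G$ is genuinely $3$-connected (that is, $\kappa(H_0)\ge 2$ and $\kappa^{'}(H_0)>2$, as in Theorem~\ref{TH:4}) and bipartite, so that $G$ is indeed a Barnette graph.
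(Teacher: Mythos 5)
Your overall strategy can be made to work, but as written it has two genuine gaps, and the second one is mathematical rather than expository. First, you defer ``the explicit design of the enlarged gadget,'' yet that design is essentially the whole proof. The paper's solution is not a new radial-type gadget at all but a local modification of the very graph $H$ from Theorem~\ref{TH:4}: in each $3-$colored digon with parallel edges $e$ and $f$, subdivide $e$ with a vertex $w$ and $f$ with a vertex $x$, and join $w$ and $x$ by two parallel edges. This splits the digon into two $3-$colored triangles and one $2-$colored digon, and simultaneously turns every $2-$colored quadrilateral into an octagon, since each of its four sides is a digon edge and is subdivided exactly once. Having done this, the paper does not rerun the Claim~\ref{clm:3} argument from hamiltonian paths; it proves directly that $H_0$ has a spanning tree of faces if and only if $H$ has a spanning tree of $2-$faces, and then quotes Theorem~\ref{TH:4}. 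Your plan to repeat Claim~\ref{clm:3} ``mutatis mutandis'' is viable in principle but strictly more work, and you have not carried it out.

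Second, and more seriously, you aim at the wrong equivalence: you announce a correspondence between hamiltonian paths and spanning trees of $2-$faces of $H_0$, with the $3-$colored triangles playing no role, but the stated decision problem concerns spanning trees of \emph{arbitrary} faces, and the triangles cannot be ignored in either direction. In the forward direction, whenever both edges of $G_0^{'}$ at an incidence of a vertex with a face lie on the hamiltonian path $L$, neither octagon adjacent to the corresponding gadget is selected; the auxiliary $2-$colored digon then meets the selected faces in no vertex at all (its only vertices are the two subdivision vertices), so selecting it would disconnect the restricted radial graph, and one is \emph{forced} to select a $3-$colored triangle, which attaches to the tree through the original vertex or face-vertex it contains. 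Hence, with the face pattern prescribed by the statement, there is in general no spanning tree consisting of $2-$faces only, and your forward direction fails as stated. In the converse direction, NP-hardness of the stated problem requires showing that an arbitrary spanning tree of faces---possibly using triangles and digons in unforeseen ways---still induces a spanning tree of $2-$faces of $H$; the paper handles this by observing that for each gadget at most one of the two triangles and the digon can be chosen (the digon shares an edge with each triangle, and the two triangles share the vertex pair $\{w,x\}$, which would create a cycle in the radial graph), so the small faces are forced pendants and the octagons alone must carry coverage and connectivity. Your proposal contains no counterpart of this argument. The remaining items you list (bipartiteness and $3-$connectedness of $G$, polynomiality of the reduction) are indeed routine.
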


\begin{proof}{
Start with $G_0$ and $G_0^{'}$ as at the beginning of the proof of Theorem~\ref{TH:4}, and construct the reduced graph $H$ as in the proof
   of Theorem~\ref{TH:4}, with $2-$colored quadrilaterals and $3-$colored
digons. If $e$ and $f$ are the two parallel edges of a $3-$colored digon,
subdivide $e$ with  vertex $w$  and
subdivide $f$ with vertex $x$. Join  $w$ and $x$  by two parallel edges. The $3-$colored
digon splits thus into two $3-$colored triangles and a $2-$colored digon, while the $2-$colored quadrilaterals become
$2-$colored octagons, in the new reduced graph $H_0$.

Suppose $H$ has a spanning tree of $2-$colored quadrilaterals $H_{\mathcal{T}}$. Select the corresponding $2-$colored
octagons in $H_0$. For a $3-$colored digon consisting of two edges $e$ and $f$ in $H$, if one of the two $2-$colored
quadrilaterals containing $e$ or $f$ is in $\mathcal{T}$, then select the
$2-$colored digon joining the subdivision vertices
$w$ and $x$; if neither of the two $2-$colored quadrilaterals containing $e$ or $f$ is in $\mathcal{T}$, then select one
of the two $3-$colored triangles containing $w$ and $x$. The $2-$colored and
$3-$colored faces in $H_0$ thus selected,
involving $2-$colored octagons, $2-$colored digons, and $3-$colored triangles, form a spanning tree of faces in $H_0$.

Conversely, suppose $H_0$ has a spanning tree of faces $H_{0\mathcal{T}_0}$. Let $\mathcal{T}$ be the set of $2-$colored
quadrilaterals in $H$ such that the corresponding $2-$colored octagons are in $\mathcal{T}_0$. Note that for each
digon in $H$, at most one of the corresponding two $3-$colored triangles and $2-$colored digon in $H_0$ can be in $\mathcal{T}_0$. Thus $H_{\mathcal{T}}$ is a spanning tree of $2-$colored (quadrilateral) faces.

Note that the reduction process between these two decision problems  can be done in polynomial time,  since every
simple plane graph has $O(n)$ faces, where $n$ is the order of graph.

Thus $H_0$ has a spanning tree of arbitrary faces if and only if $H$ has a spanning tree of
$2-$colored faces, and NP-completeness follows from Theorem~\ref{TH:4}.
}\end{proof}

Finally we show that if Barnette's Conjecture is false, then it would be NP-complete to decide whether a Barnette graph is hamiltonian.

\begin{theorem}
Assume that  Barnette's Conjecture is false. Then the decision problem of whether a Barnette graph  has
a hamiltonian cycle, is NP-complete.
\end{theorem}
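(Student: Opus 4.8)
The plan is to prove NP-completeness by establishing both membership in NP and NP-hardness. Membership in NP is immediate: given a Barnette graph and a claimed hamiltonian cycle, one can verify in polynomial time that it is indeed a hamiltonian cycle. The substance of the theorem is the hardness, and here the strategy is to build a polynomial-time reduction from the known NP-complete problem of deciding whether $G_0'$ has a hamiltonian path between two specified degree-$2$ vertices $u$ and $v$ (the problem identified as NP-complete in the paragraph preceding Theorem~\ref{TH:4}), using the assumed non-hamiltonian Barnette graph supplied by Proposition~\ref{Pr:G1}.

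First I would invoke Proposition~\ref{Pr:G1}: since Barnette's Conjecture is assumed false, there is a non-hamiltonian Barnette graph, and hence a hamiltonian Barnette graph $G_1$ with a designated edge $e=uv$ forced into every hamiltonian cycle, together with hamiltonian cycles $C_1$ and $C_2$ through $e$ and through each of the two edges $e_1,e_2$ at $u$ distinct from $e$. The key idea is to use $G_1$ as a \emph{gadget} with prescribed routing behaviour: the forced edge $e$ and the two local completions $C_1,C_2$ let $G_1$ simulate a controllable connector, while $G_1$'s own non-hamiltonicity prevents spurious hamiltonian cycles from shortcutting through it. Given an arbitrary instance $G_0'$ of the hamiltonian-path problem, I would replace edges (or the two pendant attachment sites at $u$ and $v$) of the instance by suitably many copies of the gadget $G_1$, splicing each copy into the host graph along the forced edge $e$ so that traversing the gadget consistently corresponds to using or not using the associated edge of $G_0'$. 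The whole construction must be carried out so that the resulting graph $G$ is again $3$-connected, cubic, planar, and bipartite, i.e.\ a Barnette graph, and so that $G$ is hamiltonian if and only if $G_0'$ has a hamiltonian path from $u$ to $v$.

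The core of the correctness argument then has two directions. For one direction, a hamiltonian path in $G_0'$ from $u$ to $v$ is lifted to a hamiltonian cycle of $G$ by routing through each inserted copy of $G_1$ via the appropriate completion $C_1$ or $C_2$ (depending on whether the corresponding edge of $G_0'$ is used by the path), exploiting that $e\in E(C_i)$ so the gadget always consumes all its own vertices while passing the connection signal correctly. For the converse, given a hamiltonian cycle of $G$, the fact that every hamiltonian cycle of each copy of $G_1$ must traverse $e$ forces the cycle to enter and leave each gadget in a controlled way; projecting the cycle back to the host yields a set of edges in $G_0'$ that covers every vertex with the right degrees, and the non-hamiltonicity / forced-edge structure rules out closed subtours, so the projection is exactly a hamiltonian $u$--$v$ path. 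I would verify that each copy of $G_1$ is inserted and its attachment reconnected so that bipartiteness and $3$-connectivity are preserved; preservation of $3$-connectivity across the gluing is exactly the kind of local-cut checking already used in the $G_0',G_0''$ analysis of Proposition~\ref{Pr:G1}.

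The main obstacle I anticipate is the gadget engineering: designing the insertion of $G_1$ so that it faithfully transmits ``edge used'' versus ``edge not used'' through the single forced edge $e$ while keeping $G$ simple, cubic, planar, bipartite, and $3$-connected, and while ensuring no hamiltonian cycle of $G$ can behave anomalously inside a gadget. The two prescribed cycles $C_1,C_2$ from Proposition~\ref{Pr:G1} are precisely what make the two routing modes available, and the forcedness of $e$ is what excludes illegitimate traversals, so the proof hinges on translating those two structural guarantees into a clean local replacement rule and then checking, by the cut arguments already developed, that connectivity and bipartiteness survive the global assembly. Once the gadget and its two routing modes are pinned down, the biconditional and the polynomial bound on the construction size follow routinely, completing the reduction and hence the NP-completeness.
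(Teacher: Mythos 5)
Your proposal has the right gadget in hand (Proposition~\ref{Pr:G1}) but a genuine gap in the choice of source problem, and this makes the reduction as sketched unworkable. You propose to reduce from the hamiltonian $u$--$v$ path problem in $G_0'$, the graph from the paragraph preceding Theorem~\ref{TH:4} -- but that graph is a $3$-connected planar cubic graph with an edge deleted, and it is \emph{not bipartite} in general (it comes from Garey--Johnson--Tarjan). Since the target of the reduction must be a Barnette graph, hence bipartite, and your construction splices bipartite gadgets into the host while leaving the host's cycle structure otherwise intact, odd cycles of $G_0'$ survive into $G$ and bipartiteness fails; you acknowledge that $G$ ``must be'' bipartite but give no mechanism to achieve it, and there is none available from this source problem without substantial extra parity engineering. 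The paper avoids this entirely by starting from Takanori et al.'s NP-completeness of hamiltonicity for $2$-connected cubic \emph{planar bipartite} graphs, then splitting along $2$-edge-cuts (using the parity/distance argument to keep bipartiteness) to reduce to the NP-complete problem: does a $3$-connected cubic planar bipartite graph $G'$ have a hamiltonian cycle through prespecified edges $e_1',\ldots,e_k'$. That intermediate problem is the step your proposal is missing.

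There is also a conceptual mismatch in how you intend to use the gadget. You want each inserted copy of $G_1$ to ``transmit edge used versus edge not used,'' i.e.\ to simulate a free choice; but the defining property of $G_1$ is that the edge $e$ is \emph{forced} into every hamiltonian cycle, so the gadget can only \emph{compel} a connection, never leave it optional. This is exactly why the paper deploys it against the intermediate problem above: for each prespecified edge $e_i'=x_iy_{i,1}$ it replaces the endpoint $x_i$ by $G_i\setminus\{u_i\}$, attaching $v_{i,j}$ to $y_{i,j}$. A hamiltonian cycle of $G$ meets the gadget through exactly two of its three attachment edges, so its trace inside is a hamiltonian path corresponding to a hamiltonian cycle of $G_i$ through $u_i$; forcedness of $e_i=u_iv_{i,1}$ then compels the cycle to use $v_{i,1}y_{i,1}$, enforcing the prescribed edge, while the two cycles $C_1,C_2$ guarantee both exits $v_{i,2},v_{i,3}$ are realizable so that no legitimate cycle of $G'$ is lost. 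Your ``two routing modes'' intuition matches this last point, but absent the prescribed-edge intermediate problem and with gadgets asked to encode optional edges, the reduction as proposed does not go through.
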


\begin{proof}{ Takanori et al.~\cite{Takanori} showed that the decision problem of whether a $2-$connected cubic
planar bipartite graph $R$ has a hamiltonian cycle is NP-complete.

 If such an $R$ has a $2-$edge-cut
$\{e_1,e_2\}$ that separates $R$ into two components $R^{'}$ and $R^{''}$, then their endpoints in either side
are at odd distance (see the above argument),
 so we may instead join the two endpoints of $e_1$ and $e_2$ in $R^{'}$ and $R^{''}$, separately, and
ask whether $R^{'}$ and $R^{''}$ both contain a hamiltonian cycle containing the added edge joining
the endpoints of $e_1$ and $e_2$.

Repeating this decomposition process, we eventually reduce the
decision problem of whether $R$ has a hamiltonian cycle to the decision problem of whether various $R_i$'s each
contain a hamiltonian cycle going through certain prespecified edges, with each $R_i$ being
$3-$connected or the cubic multigraph on $2$ vertices. Thus the decision problem of whether a Barnette graph $G^{'}$
has a hamiltonian cycle going through certain prespecified edges is NP-complete.

 Let a Barnette graph $G^{'}$
 with certain prespecified edges $e^{'}_1,\ldots,e^{'}_k$ that a hamiltonian cycle must traverse, be given.
Denote  $e_i^{'}=x_iy_{i,1}$ and $N_{G^{'}}(x_i)=\{y_{i,1},y_{i,2},y_{i,3}\}$, for $i=1,\ldots,k$.

Suppose that  Barnette's Conjecture is false. Then by
Proposition~\ref{Pr:G1}, there exists a hamiltonian Barnette graph $G_{i}$ with a vertex $u_i\in V(G_{i})$ and $N_{G_{i}}(u_i)=\{v_{i,1},v_{i,2},v_{i,3}\}$
 such that  every hamiltonian cycle
in $G_{i}$ traverses $e_i= u_iv_{i,1}\in E(G_i)$, $i=1\ldots,k$. Furthermore,   $G_i$ has a hamiltonian cycle traversing $e_i$ and $u_iv_{i,j}$, for $i=1,\ldots,k$ and $j=2,3$.

\medskip
Construct a new Barnette graph
$$G=\bigg(G^{'}\setminus \{x_1,\ldots,x_k\}\bigg)\cup
\bigg(\bigcup_{i=1}^k(G_i\setminus \{u_i\})\bigg) \cup \bigg(\bigcup_{i=1}^k\{v_{i,1}y_{i,1},v_{i,2}y_{i,2},v_{i,3}y_{i,3}\}\bigg).$$

Since every hamiltonian cycle in  $G_i$ traverses the edge $e_i$ and   $G_i$ has also a hamiltonian cycle traversing $e_i$ and $u_iv_{i,j}$, for $i=1,\ldots,k$ and $j=2,3$, the resulting graph $G$ has a hamiltonian cycle if and only if $G^{'}$
has a hamiltonian cycle traversing the edges $e_1^{'},\ldots,e_k^{'}$.
Moreover, $G$ can be constructed from $G^{'}$ in polynomial time and its vertex set is also polynomially enlarged from $G^{'}$.
 Therefore, the decision problem whether the resulting Barnette graph $G$ has a hamiltonian cycle, is NP-complete.
}\end{proof}



\end{document}